\numberwithin{equation}{section}
\theoremstyle{plain}
\newtheorem{thm}{Theorem}[section]
\theoremstyle{definition}
\newtheorem{rem}{Remark}[section]
\theoremstyle{definition}
\newtheorem{dfn}{Definition}[section]
\theoremstyle{lemma}
\newtheorem{lem}{Lemma}[section]
\theoremstyle{proposition}
\newtheorem{pro}{Proposition}[section]
\theoremstyle{corollary}
\newtheorem{cor}{Corollary}[section]
\begin{document}
\text{ Previously titled: "On the sum of independent random variables of the exponential family"} \\

\title{On the density for sums of independent exponential, Erlang and gamma variates}
\runtitle{On the density for sums of independent exponential, Erlang and gamma variates}
\author{\textsc{Edmond Levy} }
\begin{aug}
\\
\textnormal{Stanmore, Middlesex, UK}
\\
\textnormal{elevy123@outlook.com}
\end{aug}

\begin{abstract}
This paper re-examines the density for sums of independent exponential, Erlang and gamma random variables. By using a divided difference perspective, the paper provides a unified approach to finding closed-form formulae for such convolutions. In particular, the divided difference perspective for sums of Erlang variates suggests a new approach to finding the density for sums of independent gamma variates using fractional calculus.
\\
\\
\textbf{Keywords:}
Convolutions, exponential variables, Erlang density, gamma density, divided differences, fractional calculus \\
\\
\textbf{MSC:} 
60E05, 62E10, 26A33
\end{abstract}


\maketitle
\textnormal{ }
Version: 15 July 2021
\\
\\
\section{Introduction}
Exponentially distributed random variables are prevalent in the applied fields of probability and in stochastic modelling. In perhaps the most celebrated of such models, the Poisson process, the interarrival times of events are independent exponential random variables as a consequence of its postulates. Combining such processes in the development of his \textit{method of stages} (or phases) in queueing models, Agner Erlang was led to introduce what is now the familiar \textit{Erlang distribution} being the distribution of an integer sum of independent and identically distributed exponentials. The gamma density is a less restricted form of the Erlang density where the shape parameter may take an arbitrary positive noninteger value. 

Such random variables and their sums lie at the core of many fields such telecommunications, statistics, reliability theory and survival and risk analysis, to name a few. We mention here a few specific examples. In the field of reliability, the uncertain lifetimes of components are often modelled as exponential variates (see \cite{Amari1997}) and hence system failure times are distributed as a sum of exponentially distributed random variables. Referring to the sum of heterogeneous Erlang variables as a \textit{Generalised Integer} gamma variable, \cite{Coelho1998} shows its relevance to Wilks' lambda when testing the independence of sets of normally distributed variables. Finally, in \cite{Sim1992}, Sim develops point processes for the modelling of non-Poisson series of events where the time to the $k$th event is interpreted as the sum of $k$ independent but non-identically distributed gamma variables.

The paper begins with the formula for the \textit{hypo-exponential} density being that for the sum of independent exponentials having pairwise distinct parameters. We point out that this density has a divided difference characteristic which immediately suggests a novel perspective from which to further explore the densities of sums of independent exponentials. The paper advances a succinct representation for the density of independent Erlang distributed variables and demonstrates agreement with previous papers where such formulae have been found (and often rediscovered) by other means. Extending these results further, by using the tools of fractional calculus, a representation is also found for the density for sums of distinct independent gamma random variables. The paper concludes by showing how this approach produces the density function itself.

The divided differences is a subject more familiar in the fields of approximation theory and numerical analysis. As a fundamental tool, it has been put to good effect in simplifying and elucidating formulae for the moments of the geometric Brownian motion, see \cite{Baxter2011}. It is hoped that the ideas here demonstrate another aspect of its usefulness and provide the reader with new avenues to explore the densities for sums of exponential and gamma random variables further.

\section{The hypo-exponential density}
Let $X_i$ be a random variable having the exponential distribution with rate (or intensity) paramenter $\lambda_i >0$. Then its probability density function, $f_{X_i}(t)$, is given by:
\begin{equation}\notag
f_{X_i}(t) = 
\begin{cases}
\lambda_i e^{-\lambda_it} &\quad t \ge 0 \\
0 &\quad t < 0.
\end{cases}
\end{equation}
The sum of $n$ mutually independent exponential random variables, $X_i$, with pairwise distinct parameters, $\lambda_i$, $i=1, \dots, n$, respectively, has the hypo-exponential density, $S_n(t)$, given by
\begin{equation}
S_n(t)=\Big(\prod_{i=1}^{n} \lambda_i \Big)\sum_{j=1}^n\frac{e^{-\lambda_jt}}{\prod\limits_{\substack{k=1 \\ k\neq j}}^n (\lambda_k-\lambda_j)}, \quad t\ge 0.
\end{equation}
We note that the condition that the $\lambda_i$'s be distinct is essential as the formula (2.1) is undefined for any instance where $\lambda_i=\lambda_j$ for $i \ne j$. This formula is well known and its derivation can be found in a number of sources, for example \cite{Ross1997}. 

To an approximation theorist, the form of (2.1) is very familiar. It is the $(n-1)$th-order divided difference of the function $e(x)=e^{xt}$ at the points $-\lambda_1, \dots, -\lambda_n$, expressed in its Lagrange polynomial form, multiplied by the product of all the $\lambda_i$'s. Knowing this, suggests an alternative perspective and a common basis when extending to more general instances where the $\lambda_i$'s are not distinct and some, or all, parameters have repeats.

\section{Preliminaries}

\subsection{Newton's divided differences}

Given data points $(x_i, y_i)$ for $i=1, \dots, m$, a standard interpolation problem is to approximate the (possibly unknown) function $y=f(x)$ generating the data points by a known function constructed to pass though each such data point. Newton's method proceeds by casting the problem as determining the coefficients $b_0, \dots,b_{m-1}$ under a recursive scheme of polynomials of increasing order:
\begin{equation}
q_i(x) = q_{i-1}(x)+b_{i-1}(x-x_1)(x-x_2) \dots (x-x_{i-1}), \quad i=2, \dots, m,
\end{equation}
beginning with $q_1(x)=b_0$. If we regard these as a system of $m$ equations, we may find the coefficients by solving for the column vector $\textbf{b}=(b_0, \dots, b_{m-1})'$ in the matrix equation system:
\begin{equation}
\textbf{y}=\textbf{Tb}
\end{equation}
where, $\textbf{y}=(y_1, \dots, y_m)'$ and \textbf{T} is the matrix
\begin{equation}\notag
\begin{bmatrix} 1 & 0 & 0 & \ldots & 0 \\ 1 & (x_2-x_1) & 0 & \ldots & 0 \\ 1 & (x_3-x_1) & (x_3-x_2)(x_3-x_1) & & 0 \\ \vdots & \vdots & \vdots & \ldots & \vdots \\ 1 & (x_m-x_1) & (x_m-x_2)(x_m-x_1) & \ldots & \prod_{k=1}^{m-1} (x_m-x_k) \end{bmatrix}.
\end{equation}
When the $x_i$'s are distinct, $\textbf{T}$ is nonsingular and the unique solution is $\textbf{b}=\textbf{T}^{-1}\textbf{y}$. The triangular nature of $\textbf{T}$ reflects the recursive form of (3.1). 
It can be shown (see \cite{Atkinson1989} p.140 or \cite{Schatzman2002} Lemma 4.2.2) that the coefficient $b_{k-1}$ $(k=1, \dots, m)$ is (and defines) the \textit{$(k-1)$th-order divided difference} of the function $f(.)$ at points $x_1, \dots, x_k$, i.e. 
\begin{equation}\notag
b_{k-1} \equiv f[x_1,x_2, \dots, x_k],
\end{equation}
and $f[x_1,x_2, \dots, x_k]$ may be given the alternative following definition:
\begin{dfn}
For a function \(f(.)\) defined at points \(x_1,\ldots,x_k\), the $(k-1)$th-order divided difference is defined by the recurrence relation: 
\begin{equation}
f[x_1,\ldots,x_k]=\frac{f[x_2,\ldots,x_{k-1},x_k]-f[x_1\ldots,x_{k-2},x_{k-1}]}{x_k - x_1}
\end{equation}
with \(f[x]=f(x)\). 
\end{dfn}
\noindent
It can also be shown that when the arguments $x_1, \dots, x_k$ are distinct, the divided difference $f[x_1,\ldots,x_k]$ can be expressed in terms of \textit{Lagrange polynomials}:
\begin{equation}
f[x_1,\ldots,x_k]=\sum_{j=1}^k\frac{f(x_j)}{\prod\limits_{\substack{q=1 \\ q\neq j}}^k (x_j-x_q)},
\end{equation}
(see \cite{Atkinson1989} p.139). We note that the form of (3.4) shows \(f[x_1, \ldots, x_m]\) to be a symmetric function of its arguments and so the calculations are invariant to permutations in the order of its arguments, e.g. $f[x_1,x_2,x_3] = f[x_2,x_3,x_1]$. See \cite{Schatzman2002} Lemma 4.2.1. 

\subsection{Lemmas} The following short lemmas will prove helpful.
\begin{lem}
For any $m>1$ distinct points $x_1, \dots, x_m$ we have:
\begin{equation}
\sum_{j=1}^m\frac{1}{\prod\limits_{\substack{k=1 \\ k\neq j}}^m (x_j-x_k)} \equiv 0.
\end{equation}
\end{lem}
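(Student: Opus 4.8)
The plan is to recognise the left-hand side of (3.5) as a divided difference of the constant function and then exploit the recurrence (3.3) to show that it vanishes. Taking $f \equiv 1$ in the Lagrange representation (3.4), we have $f(x_j) = 1$ for every $j$, so the sum
\begin{equation}\notag
\sum_{j=1}^m \frac{1}{\prod\limits_{\substack{k=1 \\ k\neq j}}^m (x_j-x_k)}
\end{equation}
is precisely $f[x_1,\ldots,x_m]$ for this choice of $f$. The assertion of the lemma is therefore equivalent to the claim that the $(m-1)$th-order divided difference of a constant function vanishes whenever $m>1$, and I would prove this form of the statement.

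First I would establish the base case $m=2$ directly from Definition 3.1, giving $f[x_1,x_2] = (f(x_2)-f(x_1))/(x_2-x_1) = (1-1)/(x_2-x_1) = 0$, which is legitimate since the $x_i$ are distinct and the denominator is nonzero. Then I would argue by induction on the number of points: assuming that the $(m-2)$th-order divided difference of the constant $1$ vanishes for every collection of $m-1$ distinct points, the recurrence (3.3) yields
\begin{equation}\notag
f[x_1,\ldots,x_m] = \frac{f[x_2,\ldots,x_m] - f[x_1,\ldots,x_{m-1}]}{x_m - x_1} = \frac{0-0}{x_m - x_1} = 0,
\end{equation}
which completes the induction and hence the proof.

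The point to get right, rather than a genuine obstacle, is the bookkeeping that both shorter divided differences appearing in the recurrence are themselves instances of the inductive hypothesis, each being taken over a collection of $m-1$ distinct points, so that the hypothesis may be applied to both simultaneously. An alternative route avoiding induction would be to read the sum through the Newton interpolation scheme (3.1)--(3.2): the quantity is the leading coefficient $b_{m-1}$ of the polynomial of degree at most $m-1$ interpolating the constant data $y_i \equiv 1$; since that interpolant is simply the constant polynomial $q(x)=1$, its coefficient of $x^{m-1}$ is zero for any $m>1$. I would nonetheless favour the recurrence-based induction, as it draws only on Definition 3.1 and keeps the argument entirely self-contained.
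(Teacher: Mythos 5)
Your proof is correct, but it proceeds differently from the paper. The paper's proof stays inside the linear-algebra setup of Section 3.1: it takes the system $\textbf{y}=\textbf{Tb}$ with constant data $y_i\equiv 1$ and applies Cramer's rule, observing that each matrix $\textbf{T}_i$ for $i>1$ is singular (its replaced column of ones duplicates the first column), so that \emph{all} higher Newton coefficients $b_1,\dots,b_{m-1}$ vanish at once; identifying $b_{m-1}$ with the Lagrange-form sum via (3.4) then gives (3.5). You instead use (3.4) only to recognise the sum as $f[x_1,\dots,x_m]$ for $f\equiv 1$, and then run an induction on $m$ through the recurrence (3.3), with the base case $m=2$ computed directly; your bookkeeping is right, since both shorter divided differences in the recurrence are over $m-1$ distinct points and the denominator $x_m-x_1$ is nonzero by distinctness. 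Note that your ``alternative route'' via the interpolation scheme is essentially the paper's own argument in disguise: the singularity of $\textbf{T}_i$ is the determinantal way of saying that the constant polynomial is the (unique) interpolant of constant data, which is also the intuition the paper records in the remark following the lemma. What each approach buys: your induction is more self-contained, resting only on Definition 3.1 once the identification via (3.4) is made, and it generalises immediately to the statement that $f[x_1,\dots,x_m]=0$ for any polynomial $f$ of degree less than $m-1$; the paper's Cramer's-rule argument is shorter at the cost of invoking the nonsingularity of $\textbf{T}$ and the correspondence $b_{k-1}\equiv f[x_1,\dots,x_k]$, and it delivers the vanishing of every intermediate coefficient simultaneously rather than just the top one.
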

\begin{proof}
Examine the system (3.2) for data points $(x_i,y_i)$ but where $y_i = 1$ for $i = 1, \dots, m$. Rather than invert $\textbf{T}$, we instead solve for $\textbf{b}$ using Cramer's rule. Let $\textbf{T}_{i}$ signify the matrix found by replacing the $i$th column of $\textbf{T}$ by $\textbf{y}$ then $b_{i-1} = det(\textbf{T}_{i})/det(\textbf{T})$. Clearly, $\textbf{T}_{i}$ is singular for $i>1$, hence $det(\textbf{T}_{i})=0$ and (thus) $b_{i-1}=0$ for $i=2, \dots, m$. Noting (3.4), we therefore prove (3.5).
\end{proof} 
\noindent
Lemma 3.1 should come as no surprise as it merely states that when divided differences are taken at $m$ points where all function values $f(x_i)$ are equal, $f[x_1, \dots, x_m] \equiv 0$.

\begin{lem}
For distinct values $x_1, \dots, x_m$ and $s \in \mathbb{R}$, the decomposition of the rational function, $U(s)$,
\begin{equation}\notag
U(s)=\frac{1}{\prod\limits_{k=1}^m (x_k - s)},
\end{equation}
as a sum of partial fractions, gives
\begin{equation}
\frac{1}{\prod\limits_{k=1}^m (x_k - s)}=\sum_{j=1}^m \frac{1}{(x_j - s)\prod\limits_{\substack{k=1 \\ k\neq j}}^m(x_k - x_j)}.
\end{equation}
\end{lem}
\begin{proof}
Obvious. (See also \cite{Akkouchi2008}).
\end{proof}

\subsection{Euler's gamma function and related functions} 
We note the following definitions and expressions which are further explored in \cite{Olver2010} and in \cite{Jameson2016}. 
\\
\\
Euler's \textit{gamma function}, $\Gamma(a)$, is defined by:
\begin{equation}\notag
\Gamma(a)=\int_0^{\infty} t^{a-1}e^{-t}dt, \quad \text{ for } Re(a)>0
\end{equation}
and satisfies the relation $\Gamma(a+1)=a\Gamma(a)$. Hence, for any $a \in \mathbb{N}$, $\Gamma(a+1)=a!$. The function may be extended to all $a<0$ except at its poles $\{0, -1, -2, \dots\}$ by defining $\Gamma(a)=\Gamma(a+1)/a$. 
\\
\\
The \textit{incomplete gamma function} $\gamma(a,z)$ for $z \ge 0$ is defined by:
\begin{equation}\notag
\gamma(a,z)=\int_0^z t^{a-1}e^{-t}dt, \quad \text{ for } Re(a)>0.
\end{equation}
The function can also be expressed in series form using the confluent hypergeometric (or Kummer's) function, $M(a,b,z)$, as
\begin{equation}
\begin{split}
\gamma(a,z)&=a^{-1}z^ae^{-z}M(1,1+a, z),  \quad \text{ for } a \ne 0, -1, -2 \dots \\
&=a^{-1}z^aM(a, 1+a, -z),
\end{split}
\end{equation}
using \textit{Kummer's transformation} $M(a,b,z)$=$e^zM(b-a,b,-z)$, (see Eqns.  (8.5.1) and (13.2.39) in \cite{Olver2010}). From the definition of $\gamma(a,z)$, we have $\gamma(1,z)$ = $(1-e^{-z})$, for $Re(z)>0$, and the $n$th derivative of $\gamma(a,z)/z$ is given by:
\begin{equation}
\frac{d^n}{dz^n}\frac{\gamma(a,z)}{z} = (-1)^n\frac{\gamma(n+a,z)}{z^{n+a}},
\end{equation}
see \cite{Jameson2016}. 
\\
\\
For $M(a,b,z)$, we have that:
\begin{equation}
\frac{d^n}{dz^n}M(a,b,z) = \frac{\Gamma(a+n)\Gamma(b)}{\Gamma(a)\Gamma(b+n)}M(a+n,b+n,z),
\end{equation}
(see \cite{Olver2010} Eqn. 13.3.16) and $M(a,b,z)$ has the integral representation
\begin{equation}
M(a,b,z)=\frac{\Gamma(b)}{\Gamma(a)\Gamma(b-a)}\int_0^1e^{zt}t^{a-1}(1-t)^{b-a-1}dt,
\end{equation}
(see \cite{Olver2010} Eqn. 13.4.1). 
\\
\\
The \textit{complementary (or upper) incomplete gamma function} $\Gamma(a,z)$ for $z \ge 0$ is defined by:
\begin{equation}\notag
\Gamma(a,z)=\int_z^{\infty} t^{a-1}e^{-t}dt , \quad \text{ for } Re(a) >0
\end{equation}
and has an alternative integral form (see \cite{Olver2010} Eqn.(8.6.4))
\begin{equation}
\Gamma(a,z)=\frac{z^a}{\Gamma(1-a)}\int_0^{\infty} \frac{t^{-a}e^{-(t+z)}}{t+z}dt , \quad \text{ for } z>0, Re(a) <1.
\end{equation}
Clearly, we have that $\Gamma(a)=\Gamma(a,z)+\gamma(a,z)$, for all $z \ge 0$. When $a$ is an integer $n \ge 1$, $\gamma(n,z)$ and $\Gamma(n,z)$ may be expressed as finite series:
\begin{equation}\notag
\gamma(n,z) =  (n-1)!\big(1-e^{-z}\sum_{r=0}^{n-1} \frac{z^r}{r!}\big) \quad \text{ and } \quad \Gamma(n,z) = (n-1)!e^{-z}\sum_{r=0}^{n-1} \frac{z^r}{r!}.
\end{equation}
Finally, Euler's \textit{beta function}, $B(z,y)$ for $z>0$, $y>0$, is defined by:
\begin{equation}\notag
B(z,y)=\int_0^1 t^{z-1}(1-t)^{y-1}dt 
\end{equation}
and satisfies the relation $B(z,y)=\Gamma(z)\Gamma(y)/\Gamma(z+y)$.

\section{Convolution of exponential random variables with distinct parameters}
The definition and representation of divided differences in Sec. 3.1 immediately suggests the following proposition giving the hypo-exponential density an alternative compact form.
\begin{pro}
Let $X_1, \dots, X_n$ be $n$ independent exponential random variables with parameters $\lambda_i$, $i=1, \dots, n$ and where $\lambda_i \ne \lambda_j$ when $i \ne j$. Let $Y_i = X_1+ \dots + X_i$ and denote its density by $S_i(t)$. Then $Y_n$ has the hypo-exponential density given by
\begin{equation}
S_n(t)=\Big(\prod_{i=1}^{n} \lambda_i \Big)e[-\lambda_1, \dots, -\lambda_n], \quad t\ge 0,
\end{equation}
where $e[-\lambda_1, \dots, -\lambda_n]$ is the $(n-1)$th-order divided difference for the function $e(x)=e^{xt}$ at points $-\lambda_1, \dots, -\lambda_n$.
\end{pro}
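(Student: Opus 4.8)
The plan is to give two routes and to treat the second as the genuine proof. The quick route simply reads the established density (2.1) through the Lagrange form (3.4). Setting $f=e$ with $e(x)=e^{xt}$ and taking the interpolation points to be $x_j=-\lambda_j$, the denominator of the $j$th Lagrange term is $\prod_{q\neq j}(x_j-x_q)=\prod_{q\neq j}(\lambda_q-\lambda_j)$, since $x_j-x_q=-\lambda_j+\lambda_q$. Hence $e[-\lambda_1,\dots,-\lambda_n]=\sum_{j=1}^n e^{-\lambda_j t}/\prod_{k\neq j}(\lambda_k-\lambda_j)$, and multiplying by $\prod_i\lambda_i$ reproduces (2.1) term for term. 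On this reading the proposition is a restatement of a known fact; the only thing to check is that the product of differences in (2.1) matches the Lagrange denominator, and it does exactly, with no sign correction needed.

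I would prefer, however, a derivation that does not presuppose (2.1) but builds $S_n$ directly from the convolution structure $Y_n=Y_{n-1}+X_n$, arguing by induction on $n$. The base case $n=1$ is immediate, since $e[-\lambda_1]=e^{-\lambda_1 t}$ gives $S_1(t)=\lambda_1 e^{-\lambda_1 t}$. For the inductive step I would assume the Lagrange-form expression for $S_{n-1}$ and compute
\[
S_n(t)=\int_0^t S_{n-1}(u)\,\lambda_n e^{-\lambda_n(t-u)}\,du .
\]
Interchanging sum and integral and evaluating each elementary integral $\int_0^t e^{-\lambda_j u}e^{-\lambda_n(t-u)}\,du=(e^{-\lambda_j t}-e^{-\lambda_n t})/(\lambda_n-\lambda_j)$ produces, after absorbing the factor $1/(\lambda_n-\lambda_j)$ into the product $\prod_{k\neq j}(\lambda_k-\lambda_j)$, precisely the coefficients $\prod_i\lambda_i/\prod_{k\neq j}(\lambda_k-\lambda_j)$ of the $n-1$ exponentials $e^{-\lambda_j t}$, $j\le n-1$, that appear in the target divided difference.

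The main obstacle, and the only genuinely nontrivial step, is the leftover term in $e^{-\lambda_n t}$. Collecting its coefficient gives $-\prod_i\lambda_i\sum_{j=1}^{n-1}1/\prod_{k\neq j}(\lambda_k-\lambda_j)$ with the products now taken over $k\le n$, and I must show this equals the missing $j=n$ coefficient $\prod_i\lambda_i/\prod_{k=1}^{n-1}(\lambda_k-\lambda_n)$. Equivalently, the full sum $\sum_{j=1}^n 1/\prod_{k\neq j}(\lambda_k-\lambda_j)$ must vanish, which is exactly Lemma 3.1 applied at the points $-\lambda_1,\dots,-\lambda_n$. This is where I expect the argument to hinge, and it is precisely what makes Lemma 3.1 worth recording beforehand: with it the $e^{-\lambda_n t}$ term snaps into place, all $n$ coefficients agree with those of $\prod_i\lambda_i\,e[-\lambda_1,\dots,-\lambda_n]$, and the induction closes.
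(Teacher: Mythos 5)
Your proposal is correct and follows essentially the same path as the paper: the paper likewise observes that the result follows by inspection of (2.1) via the Lagrange representation (3.4), and then gives the same induction on $n$ through the convolution $Y_n=Y_{n-1}+X_n$, invoking Lemma 3.1 at exactly the point you identify, to convert the leftover coefficient of $e^{-\lambda_n t}$ into the missing $j=n$ Lagrange term $\prod_i \lambda_i / \prod_{k=1}^{n-1}(\lambda_k-\lambda_n)$. There is nothing to add; your sign bookkeeping $x_j - x_q = \lambda_q - \lambda_j$ at the points $x_j=-\lambda_j$ is also handled correctly.
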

\begin{proof}
The proof follows by inspection of (2.1) and recognising the Lagrange polynomial representation (3.4). However, Lemma 3.1 allows for a short proof using induction on $n$. 

For $n=1$ the equation holds trivially. We assume the truth of equation (4.1) at $n-1$ and examine the density for $Y_n = Y_{n-1} + X_n$. Performing the convolution of $f_{X_n}(t)$ with $S_{n-1}(t)$, we have:
\begin{equation}\notag
\begin{split}
S_n(t)&= {\int_0^t\lambda_n e^{-\lambda_n u}S_{n-1}(t-u)\text{d}u} \\
&= {\int_0^t\lambda_n e^{-\lambda_n u} \big(\prod_{i=1}^{n-1} \lambda_i \big)\sum_{j=1}^{n-1}\frac{e^{-\lambda_j(t-u)}}{\prod\limits_{\substack{k=1 \\ k\neq j}}^{n-1} (\lambda_k-\lambda_j)}\text{d}u} \quad (\text{using (3.4) and }\\
&\text{with the understanding that for n=2, } \prod\limits_{\substack{k=1 \\ k\neq 1}}^{n-1} (\lambda_k-\lambda_1) =1)\\
&=\big(\prod_{i=1}^n \lambda_i \big)\sum_{j=1}^{n-1}\frac{e^{-\lambda_jt}{\int_0^te^{-(\lambda_n-\lambda_j)u}\text{d}u}}{\prod\limits_{\substack{k=1 \\ k\neq j}}^{n-1} (\lambda_k-\lambda_j)} \\
&= \big(\prod_{i=1}^n \lambda_i \big)\sum_{j=1}^{n-1}\frac{e^{-\lambda_jt}[1-e^{-(\lambda_n-\lambda_j)t}]}{(\lambda_n-\lambda_j)\prod\limits_{\substack{k=1 \\ k\neq j}}^{n-1} (\lambda_k-\lambda_j)} \\
&= \big(\prod_{i=1}^n\lambda_i \big)\big(\sum_{j=1}^{n-1}\frac{e^{-\lambda_jt}}{\prod\limits_{\substack{k=1 \\ k\neq j}}^n (\lambda_k-\lambda_j)}-\sum_{j=1}^{n-1}\frac{e^{-\lambda_n t}}{\prod\limits_{\substack{k=1 \\ k\neq j}}^n (\lambda_k-\lambda_j)}\big).
\end{split}
\end{equation}
From Lemma 3.1,
\begin{equation}\notag
\sum_{j=1}^n \frac{1}{\prod\limits_{\substack{k=1 \\ k\neq j}}^n (\lambda_k-\lambda_j)} = \sum_{j=1}^{n-1}\frac{1}{\prod\limits_{\substack{k=1 \\ k\neq j}}^n (\lambda_k-\lambda_j)}+\frac{1}{\prod\limits_{k=1}^{n-1} (\lambda_k-\lambda_n)}=0
\end{equation}
hence
\begin{equation}\notag
-\sum_{j=1}^{n-1}\frac{e^{-\lambda_n t}}{\prod\limits_{\substack{k=1 \\ k\neq j}}^n (\lambda_k-\lambda_j)}=\frac{e^{-\lambda_n t}}{\prod\limits_{k=1}^{n-1} (\lambda_k-\lambda_n)}
\end{equation}
and noting (3.4), Proposition 4.1 (and (2.1)) therefore follows.
\end{proof}

\begin{rem}
With a little work and noting the Hermite-Genocchi integral relation (see Theorem 8.1 below), the density formula (2.1) can be re-written as an integral over the relevant simplex and must therefore be a divided difference.
\end{rem}

\begin{rem}
Propositions 2 and 3 of \cite{Baxter1997} show divided difference interpretations for more general forms involving exponentials than the one in (2.1).
\end{rem}

\begin{rem}
We will see below that expression (4.1), unlike expression (2.1), has a valid interpretation even in an instance when there are repeats of some or all of the $\lambda_i$'s.
\end{rem}

\section{Convolution of exponential random variables with identical parameters}
When $n$ independent exponential random variables $X_i$ have identical parameter $\lambda$, their sum $X_1 + \dots + X_n$ has the Erlang distribution with parameters $(n,\lambda)$. Its density, $Erl_{n,\lambda}(t)$, is defined by
\begin{equation}\notag
Erl_{n,\lambda}(t)=\frac{\lambda^n t^{n-1}}{(n-1)!}e^{-\lambda t}, \quad t \ge 0,
\end{equation}
see \cite{Akkouchi2008}, which is the gamma density with an integer shape parameter.

We have from (4.1) with $n=2$ and $\lambda_2=\lambda_1$
\begin{equation}\notag
S_2(t)=\lambda_1^2e[-\lambda_1,-\lambda_1].
\end{equation}
However, applying Definition 3.1 in this case would lead to division by zero. The extension to the instance of repeats in the arguments for the divided difference (sometimes called the \textit{confluent} or \textit{osculatory} case) is provided by its integral form.

Consider again the first-order divided difference $f[a_1,a_2]$
\begin{equation}\notag
f[a_1,a_2]=\frac{f(a_2)-f(a_1)}{a_2-a_1}=\frac{1}{a_2-a_1}\int_{a_1}^{a_2}f'(u)du,
\end{equation}
for an arbitrary variable $u$. Applying a change of variable to $v$ using $u=a_1+v(a_2-a_1)$ yields
\begin{equation}\notag
\begin{split}
f[a_1,a_2]&=\frac{1}{a_2-a_1} \int_{0}^{1}f'(a_1+v(a_2-a_1))(a_2-a_1)dv \\
&=\int_{0}^{1}f'(a_1+v(a_2-a_1))dv.
\end{split}
\end{equation}
It follows, when $a_2=a_1$ we have:
\begin{equation}\notag
f[a_1,a_1]=f'(a_1)\int_{0}^{1}dv = f'(a_1).
\end{equation}

\begin{thm}
(Hermite). 
Let $a_1,\dots,a_k$ be real (not necessarily distinct) and let $f(x)$ have a continuous $(k-1)$th derivative in the interval $[a_{min},a_{max}]$, where $a_{min}$ and $a_{max}$ are (respectively) the minimum and maximum of $a_1,\dots,a_k$. Then
\begin{equation}\notag
\begin{split}
f[a_1,\dots,a_k]= \int_0^1\int_0^{v_1} \dots \int_0^{v_{k-2}}f^{(k-1)}\big( a_1&+v_1(a_2-a_1)+v_2(a_3-a_2)+ \\
&\dots +v_{k-1}(a_k-a_{k-1})\big) dv_{k-1} \dots dv_1,
\end{split}
\end{equation}
where $f^{(m)}(a)$ denotes the $m$th-order derivative of $f(x)$ evaluated at $x=a$.
\end{thm}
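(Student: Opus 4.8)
The plan is to prove the formula by induction on $k$, using the recurrence of Definition 3.1 together with the single-variable computation already carried out above. The base case $k=1$ is the trivial identity $f[a_1]=f(a_1)$, in which the iterated integral is empty and $f^{(0)}(a_1)=f(a_1)$; the case $k=2$ is exactly the calculation $f[a_1,a_2]=\int_0^1 f'(a_1+v_1(a_2-a_1))\,dv_1$ established in the preceding paragraphs. I would take as inductive hypothesis that the stated formula holds, for \emph{distinct} nodes, for every collection of $k-1$ real points and every $f$ possessing a continuous $(k-2)$th derivative, and then extend to repeated nodes by a continuity argument at the very end.

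For the inductive step I would start from the right-hand side for $k$ points and evaluate the \emph{innermost} integral, the one in $v_{k-1}$ over $[0,v_{k-2}]$. Since the argument of $f^{(k-1)}$ is affine in $v_{k-1}$ with slope $(a_k-a_{k-1})$, the fundamental theorem of calculus replaces $f^{(k-1)}$ by $f^{(k-2)}$ evaluated at the endpoints $v_{k-1}=v_{k-2}$ and $v_{k-1}=0$, divided by $(a_k-a_{k-1})$. At $v_{k-1}=0$ the remaining argument is $a_1+\sum_{i=1}^{k-2}v_i(a_{i+1}-a_i)$, while at $v_{k-1}=v_{k-2}$ the last two terms telescope, $v_{k-2}(a_{k-1}-a_{k-2})+v_{k-2}(a_k-a_{k-1})=v_{k-2}(a_k-a_{k-2})$, leaving $a_1+\sum_{i=1}^{k-3}v_i(a_{i+1}-a_i)+v_{k-2}(a_k-a_{k-2})$. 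The region for the surviving variables is $1\ge v_1\ge\dots\ge v_{k-2}\ge 0$, so each of the two resulting $(k-2)$-fold iterated integrals is in precisely the shape of the theorem for $k-1$ nodes: the first for the node set $a_1,\dots,a_{k-2},a_k$ and the second for $a_1,\dots,a_{k-1}$. Applying the inductive hypothesis to each, the right-hand side becomes
\[
\frac{f[a_1,\dots,a_{k-2},a_k]-f[a_1,\dots,a_{k-2},a_{k-1}]}{a_k-a_{k-1}},
\]
which, after using the symmetry of divided differences noted after (3.4) to place $a_{k-1}$ and $a_k$ as the first and last arguments, is exactly $f[a_1,\dots,a_k]$ by the recurrence (3.3). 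This closes the induction for distinct nodes.

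Two steps require care, and only the second is a genuine obstacle. The first is the bookkeeping: one must check that the endpoint value at $v_{k-1}=v_{k-2}$ really does telescope into the $(k-1)$-node integrand for $a_1,\dots,a_{k-2},a_k$, which is the short algebraic computation indicated above. The real difficulty is the confluent case, the whole purpose of the theorem: when $a_k=a_{k-1}$ (or more generally when nodes coincide) the denominator $a_k-a_{k-1}$ vanishes and the recurrence (3.3) is unavailable, so the identification with $f[a_1,\dots,a_k]$ cannot be read off directly. I would resolve this exactly as the passage preceding the theorem suggests: the iterated integral itself never divides by anything, and since $f^{(k-1)}$ is continuous and the domain of integration is the compact ordered simplex $1\ge v_1\ge\dots\ge v_{k-1}\ge 0$, the right-hand side is a continuous function of $(a_1,\dots,a_k)$ on all of $\mathbb{R}^k$. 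It therefore furnishes the unique continuous extension of the divided difference off the set of distinct nodes, which is precisely the meaning assigned to $f[a_1,\dots,a_k]$ in the confluent case; letting the nodes coalesce then yields the formula in full generality. As a consistency check, rewriting the argument as the convex combination $(1-v_1)a_1+(v_1-v_2)a_2+\dots+v_{k-1}a_k$ confirms it always lies in $[a_{\min},a_{\max}]$, so $f^{(k-1)}$ is only ever evaluated where it is assumed continuous.
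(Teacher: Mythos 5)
Your proof is correct, but note that the paper does not actually prove this theorem at all: its ``proof'' is a citation to Schatzman (2002), Theorem 4.2.3. What you have written is a complete, self-contained version of the standard Hermite--Genocchi induction, which is essentially the argument of the cited reference. The mechanics check out: integrating the innermost variable $v_{k-1}$ over $[0,v_{k-2}]$ via the fundamental theorem of calculus (legitimate, since $f^{(k-1)}$ continuous makes $f^{(k-2)}$ of class $C^1$ along the affine path) produces the difference quotient $\bigl(f[a_1,\dots,a_{k-2},a_k]-f[a_1,\dots,a_{k-1}]\bigr)/(a_k-a_{k-1})$ after the telescoping $v_{k-2}(a_{k-1}-a_{k-2})+v_{k-2}(a_k-a_{k-1})=v_{k-2}(a_k-a_{k-2})$, and your appeal to the symmetry noted after (3.4) to bring $a_{k-1}$ and $a_k$ into the first and last positions before invoking the recurrence (3.3) is exactly the right repair, since (3.3) as stated differences over the first and last arguments only. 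Your handling of the confluent case is also the right move and deserves one comment: the continuity argument proves that the integral is \emph{the} continuous extension of the divided difference from the dense open set of distinct nodes (your observation that the argument is the convex combination $(1-v_1)a_1+(v_1-v_2)a_2+\dots+v_{k-1}a_k$, hence stays in $[a_{\min},a_{\max}]$, is what makes this work). This is complete within the paper's framework, because the paper takes exactly this continuous extension as the \emph{definition} of the divided difference at repeated nodes, as the discussion immediately following the theorem makes explicit. If instead one defined confluent divided differences independently, e.g.\ via Hermite interpolation with derivative conditions, your argument would need one further step identifying that definition with the continuous extension; as a review point it is worth being aware which definition is in force. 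Relative to the paper, then, your proposal buys a genuine proof where the paper outsources one, at the cost of nothing the paper has not already assumed.
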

\begin{proof}
See \cite{Schatzman2002} Theorem 4.2.3.
\end{proof}

As a consequence, the divided difference definition is extended to a (unique) continuous function of the points $a_1, \dots, a_k$ so long as the variables are evaluated within the interval of continuity of the $(k-1)$th derivative of $f(x)$. The instance of $k$ arguments $a_1, \dots, a_1$ is therefore found as:
\begin{equation}
\begin{split}
f[a_1,\dots,a_1]&= f^{(k-1)}(a_1)\int_0^1\int_0^{v_1} \dots \int_0^{v_{k-2}}dv_{k-1} \dots dv_1 \\
&= f^{(k-1)}(a_1)\int_0^1\int_0^{v_1} \dots \int_0^{v_{k-3}}v_{k-2}dv_{k-2} \dots dv_1 \\
&= f^{(k-1)}(a_1)\int_0^1\int_0^{v_1} \dots \int_0^{v_{k-4}}\frac{v_{k-3}^2}{2!}dv_{k-3} \dots dv_1 \\
&\dots \\
&= f^{(k-1)}(a_1)\frac{1}{(k-1)!}.
\end{split}
\end{equation}
Using (5.1), we may now extend the interpretation of (4.1) and state that the density for the sum of $n$ independent exponential random variables with identical parameter $\lambda$ is given by:
\begin{equation}\notag
S_n(t)=\lambda^n e[\underbrace{-\lambda,\dots,-\lambda}_\text{n times}] =\frac{\lambda^ne^{(n-1)}(-\lambda)}{(n-1)!} =\frac{\lambda^n t^{n-1}e^{-\lambda t}}{(n-1)!},
\end{equation}
the Erlang density with parameters $(n, \lambda)$.

\section{Convolution of exponential random variables in general - a novel representation}
In this section we continue to consider the density for sums of independent exponential random variables and develop an alternative representation for the general case of the sum of independent Erlang distributed random variables. Expressions for this density have been derived in a number of previous articles, such as \cite{Mathai1982}, \cite{Harrison1990}, \cite{Amari1997}, \cite{Coelho1998} and \cite{Jasiulewicz2003}. The technique used in these papers is either by taking Laplace transform of the convolution of random variables and inspection for its inverse, or through repeated integration by parts. Here, we provide a new direct approach by exploiting the representation of the hypo-exponential density in Proposition 4.1.

\subsection{Sums of exponentials and an Erlang distributed random variable}
In preparation, we examine the case studied in \cite{Khuong2006} where the density for the sum $Y=X_1 + \dots +X_n$ of $n$ independent exponential random variables is considered, with $m_1>1$ of these having parameter $\lambda_1$ and so their sum, $Z_1$, has the Erlang distribution with parameters $(m_1,\lambda_1)$. The remaining $n-m_1$, $X_i$'s, are independent exponential random variables with distinct parameters $\lambda_2, \dots, \lambda_{k}$, where $k:=n-m_1+1$. Looking to equation (4.1), the divided difference representation for $S_n(t)$, this case requires the determination of:
\begin{equation}
\begin{split}
S_n(t)&=\lambda_1^{m_1}\lambda_2 \dots \lambda_k e[\underbrace{-\lambda_1, \dots, -\lambda_1}_\text{$m_1$ times},-\lambda_2, \dots, -\lambda_k] \\
&=\lambda_1^{m_1}\lambda_2 \dots \lambda_k e[-\lambda_1^{(m_1)},-\lambda_2, \dots, -\lambda_{k}].
\end{split}
\end{equation}
In (6.1), $-\lambda_1^{(m_1)}$ denotes $m_1$ occurrences of $-\lambda_1$ in the argument list. One way to proceed is simply to apply the recurrence definition of the divided difference (3.3) across any two distinct arguments, say:
\begin{equation}
\begin{split}
e[-\lambda_1^{(m_1)},-\lambda_2, \dots, -\lambda_{k}]&=\frac{1}{\lambda_1-\lambda_k}\Big(e[-\lambda_1^{(m_1-1)},-\lambda_2, \dots, -\lambda_{k}] \\
& \qquad -e[-\lambda_1^{(m_1)},-\lambda_2, \dots, -\lambda_{k-1}]\Big)
\end{split}
\end{equation}
and to continue until all remaining distinct arguments are exhausted so that only repeats remain and then apply (5.1).

To clarify this procedure, take the illustrative example considered in \cite{Harrison1990}. Let $Y=Z_1+X_2$ where $Z_1$ has the density $Erl_{3,\lambda_1}(t)$ and $X_2$ is exponential with parameter $\lambda_2$. Then, beginning with (6.1), we may find the density for $Y$ using (6.2) as follows:
\begin{equation}\notag
\begin{split}
S_n(t)&=\lambda_1^3\lambda_2e[-\lambda_1^{(3)},-\lambda_2] \\
&=\lambda_1^3\lambda_2\Big(\frac{e[-\lambda_1^{(2)}, -\lambda_2]-e[-\lambda_1^{(3)}]}{\lambda_1-\lambda_2}\Big) \\
&=\lambda_1^3\lambda_2\Big(\frac{\big(e[-\lambda_1, -\lambda_2] - e[-\lambda_1^{(2)}]\big)}{(\lambda_1-\lambda_2)^2} -\frac{t^2e^{-\lambda_1t}}{2(\lambda_1-\lambda_2)}\Big) \quad \text{ (using (5.1))}\\
&=\lambda_1^3\lambda_2\Big(\frac{e^{-\lambda_2t}-e^{-\lambda_1t}}{(\lambda_1-\lambda_2)^3} - \frac{te^{-\lambda_1t}}{(\lambda_1-\lambda_2)^2}-\frac{t^2e^{-\lambda_1t}}{2(\lambda_1-\lambda_2)}\Big) \quad \text{ (using (5.1) again)} \\
&=\lambda_1^3\lambda_2\Big(\frac{e^{-\lambda_1t}}{(\lambda_2-\lambda_1)^3} - \frac{te^{-\lambda_1t}}{(\lambda_2-\lambda_1)^2} + \frac{t^2e^{-\lambda_1t}}{2(\lambda_2-\lambda_1)}+\frac{e^{-\lambda_2t}}{(\lambda_1-\lambda_2)^3}\Big)
\end{split}
\end{equation}
which agrees with \cite{Harrison1990}, p.76 (after correcting for a mistaken division by 2 in the $D_1$ term there). 

In general, the above approach, whilst perfectly correct, very quickly leads to a profusion of calculations and is only practical for a small number of repeats or distinct arguments.

\begin{pro}
Let $a_1,\dots,a_m$ be distinct and let $f(x)$ have a continuous $k_1$th derivative in the interval $[a_{min},a_{max}]$. Then
\begin{equation}\notag
\frac{\partial^{k_1}}{\partial a_1^{k_1}}f[a_1,a_2, \dots, a_m]=k_1!f[a_1^{(k_1+1)}, a_2,\dots,a_m],
\end{equation}
where the notation $a^{(i)}$ signifies that $a$ appears $i$ times in the list of arguments for the divided difference term.
\end{pro}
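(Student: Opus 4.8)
The plan is to prove the identity directly from Hermite's integral representation (Theorem 5.1), writing both sides as integrals over the same standard simplex and matching integrands. Throughout I assume $f$ is smooth enough---a continuous $(m-1+k_1)$th derivative on $[a_{\min},a_{\max}]$---for Hermite's formula to apply to the confluent difference on the right and for differentiation under the integral sign on the left; this is the smoothness one is forced into once one notices that the right-hand term is a divided difference of order $m-1+k_1$.

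First I would apply Theorem 5.1 to the left-hand difference,
\[
f[a_1,\dots,a_m]=\int_0^1\int_0^{v_1}\cdots\int_0^{v_{m-2}} f^{(m-1)}(\xi)\,dv_{m-1}\cdots dv_1,
\]
with $\xi=a_1+v_1(a_2-a_1)+\cdots+v_{m-1}(a_m-a_{m-1})$. The crucial elementary fact is that $\xi$ is affine in $a_1$ with $\partial\xi/\partial a_1=1-v_1$, a quantity free of $a_1$. Hence $k_1$-fold differentiation under the integral sign simply raises the order of $f$ by $k_1$ and brings down the factor $(1-v_1)^{k_1}$, giving
\[
\frac{\partial^{k_1}}{\partial a_1^{k_1}}f[a_1,\dots,a_m]=\int_0^1\int_0^{v_1}\cdots\int_0^{v_{m-2}} (1-v_1)^{k_1}\,f^{(m-1+k_1)}(\xi)\,dv_{m-1}\cdots dv_1.
\]

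Next I would apply Theorem 5.1 to the confluent difference $f[a_1^{(k_1+1)},a_2,\dots,a_m]$, whose $N=m+k_1$ ordered arguments are $b_1=\cdots=b_{k_1+1}=a_1$ and $b_{k_1+1+j}=a_{1+j}$ for $j=1,\dots,m-1$. Since the consecutive equal arguments make $b_{i+1}-b_i=0$ for $i=1,\dots,k_1$, the first $k_1$ integration variables disappear from the argument of $f^{(N-1)}$, which collapses to exactly the same $\xi$ once the surviving variables $u_{k_1+1},\dots,u_{N-1}$ are relabelled $v_1,\dots,v_{m-1}$. Those $k_1$ absent variables are then free, and integrating them out over the ordered simplex $\{\,v_1\le u_{k_1}\le\cdots\le u_1\le 1\,\}$ contributes the volume $(1-v_1)^{k_1}/k_1!$. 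This produces
\[
f[a_1^{(k_1+1)},a_2,\dots,a_m]=\frac{1}{k_1!}\int_0^1\int_0^{v_1}\cdots\int_0^{v_{m-2}} (1-v_1)^{k_1}\,f^{(m-1+k_1)}(\xi)\,dv_{m-1}\cdots dv_1,
\]
and comparison of the two displayed integrals yields the proposition at once.

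The step I expect to be the main obstacle is the second one: correctly tracking which integration variables collapse upon inserting the $k_1+1$ copies of $a_1$, and evaluating the residual ordered-simplex volume to extract the factor $(1-v_1)^{k_1}/k_1!$. To sidestep a long direct manipulation, an alternative is to treat $k_1=1$ first---where the residual volume is just $1-v_1$, giving the clean rule $\partial_{a_1}f[a_1,\dots,a_m]=f[a_1,a_1,a_2,\dots,a_m]$---and then induct on $k_1$, applying one further $\partial_{a_1}$ to the inductive hypothesis and re-using the $k_1=1$ confluent computation at the already-repeated slot. The only remaining technical point is the legitimacy of differentiating under the integral sign, which is immediate from the continuity of $f^{(m-1+k_1)}$ on the compact interval $[a_{\min},a_{\max}]$.
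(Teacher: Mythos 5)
Your proof is correct, but it takes a genuinely different route from the paper's. The paper argues by elementary means: it first shows $\frac{\partial}{\partial a_1}f[a_1,a_2,\dots,a_m]=f[a_1^{(2)},a_2,\dots,a_m]$ by writing the difference quotient as the appended divided difference $f[a_1,a_2,\dots,a_m,a_1+h]$ and letting $h\to 0$, then handles already-repeated slots via the multivariate chain rule (treating the $n$ copies of $a_1$ as separate variables and invoking symmetry) to get $\frac{\partial}{\partial a_1}f[a_1^{(n)},a_2,\dots,a_m]=nf[a_1^{(n+1)},a_2,\dots,a_m]$, whence the factor $k_1!$ accumulates by induction; note that your fallback sketch ($k_1=1$ first, then induct) essentially reproduces this argument. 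Your main argument instead verifies the identity in one shot from Hermite's representation (Theorem 5.1): differentiating under the integral using $\partial\xi/\partial a_1=1-v_1$, and, on the confluent side, correctly tracking that the $k_1$ zero gaps $b_{i+1}-b_i=0$ free up $k_1$ simplex variables whose ordered-volume integral over $[v_1,1]$ yields $(1-v_1)^{k_1}/k_1!$ --- this bookkeeping, which you flagged as the delicate step, is in fact carried out correctly. What your route buys is an explicit, self-contained computation with no limit arguments; what it costs is smoothness: you need $f^{(m-1+k_1)}$ continuous, whereas the proposition assumes only a continuous $k_1$th derivative, and the weaker hypothesis genuinely suffices. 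Indeed, your remark that the right-hand side ``forces'' $C^{m-1+k_1}$ is not accurate: since the repeats occur only at $a_1$ and the nodes $a_2,\dots,a_m$ stay distinct, the confluent difference $f[a_1^{(k_1+1)},a_2,\dots,a_m]$ is already well defined (via Hermite interpolation, or the recurrence plus a single limit at $a_1$) once $f^{(k_1)}$ exists and is continuous, and the paper's limit/chain-rule proof operates under that stated hypothesis. So you have proved the proposition under a strictly stronger assumption; the result as stated requires the paper's (or an equivalent limit-based) argument, or a localisation of your integral argument at the node $a_1$.
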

\begin{proof}
See Appendix A.1.
\end{proof}

We return to the case considered by \cite{Khuong2006} for general $k$. Using (4.1) and Proposition 6.1, we may therefore conclude the following proposition giving a concise expression for the density $f_Y(t)$ for $Y=Z+X_2 + \dots + X_k$:
\begin{pro}
Let $Z_1$ be a random variable having the Erlang distribution with parameters $(m_1,\lambda_1)$ and $X_i$, for $i=2, \dots,k$, be mutually independent exponential random variables with distinct parameters $\lambda_i$ and independent of $Z_1$. Then for $e(x)=e^{xt}$, the density $f_Y(t)$ for $Y=Z_1+X_2 + \dots + X_k$ is given by:
\begin{equation}\notag
\begin{split}
f_Y(t)&=\lambda_1^{m_1}\lambda_2\dots\lambda_k e[-\lambda_1^{(m_1)}, -\lambda_2, \dots, -\lambda_k] \\
&=\frac{\lambda_1^{m_1}\lambda_2\dots\lambda_k}{(m_1-1)!}.\frac{\partial^{m_1 - 1}}{\partial (-\lambda_1)^{m_1-1}}e[-\lambda_1, -\lambda_2 \dots, -\lambda_k].
\end{split}
\end{equation}
\end{pro}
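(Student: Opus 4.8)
The statement to be proved consists of two equalities, and my plan is to treat them separately. The second equality is essentially a restatement of Proposition 6.1, so the real content lies in the first, which asserts that the confluent divided-difference expression $\lambda_1^{m_1}\lambda_2\cdots\lambda_k\, e[-\lambda_1^{(m_1)}, -\lambda_2, \dots, -\lambda_k]$ is indeed the density of $Y = Z_1 + X_2 + \dots + X_k$. The overall strategy is to obtain the first equality by a continuity (coalescence) argument from the all-distinct case already established in Proposition 4.1, and then to convert the confluent divided difference into the partial-derivative form by a single application of Proposition 6.1.

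For the first equality I would argue as follows. Consider $n = m_1 + (k-1)$ independent exponential variables with pairwise distinct parameters $\mu_1, \dots, \mu_{m_1}, \lambda_2, \dots, \lambda_k$. By Proposition 4.1 the density of their sum is
\begin{equation}\notag
\Big(\prod_{i=1}^{m_1}\mu_i\Big)\lambda_2\cdots\lambda_k\, e[-\mu_1, \dots, -\mu_{m_1}, -\lambda_2, \dots, -\lambda_k].
\end{equation}
Both sides of this identity are continuous functions of the parameters $(\mu_1, \dots, \mu_{m_1})$: the left-hand side because the density of a sum of independent variables is the convolution of their densities, which for fixed $t \ge 0$ depends continuously on the exponential parameters; and the right-hand side because $e(x) = e^{xt}$ is $C^\infty$, so by Hermite's theorem (Theorem 5.1) the divided difference has an integral representation that extends continuously to coincident arguments. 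Letting $\mu_1, \dots, \mu_{m_1} \to \lambda_1$, the sum of those $m_1$ exponentials becomes the Erlang$(m_1, \lambda_1)$ variable $Z_1$, so the left-hand side tends to $f_Y(t)$, while the right-hand side tends to $\lambda_1^{m_1}\lambda_2\cdots\lambda_k\, e[-\lambda_1^{(m_1)}, -\lambda_2, \dots, -\lambda_k]$. This yields the first equality.

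The second equality then follows by applying Proposition 6.1 with $f = e$, $a_1 = -\lambda_1$, order $k_1 = m_1 - 1$, and the remaining distinct arguments $-\lambda_2, \dots, -\lambda_k$. Proposition 6.1 gives
\begin{equation}\notag
\frac{\partial^{m_1-1}}{\partial(-\lambda_1)^{m_1-1}} e[-\lambda_1, -\lambda_2, \dots, -\lambda_k] = (m_1-1)!\, e[-\lambda_1^{(m_1)}, -\lambda_2, \dots, -\lambda_k],
\end{equation}
and rearranging, then multiplying through by $\lambda_1^{m_1}\lambda_2\cdots\lambda_k$, produces the claimed derivative form.

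The main obstacle I anticipate is making the coalescence argument for the first equality fully rigorous, namely justifying that the hypo-exponential density converges to $f_Y(t)$ as the parameters merge. Convergence in distribution is immediate, but to identify the pointwise limit of the densities one needs the densities themselves to converge for each fixed $t \ge 0$, and this is precisely where the continuity of the divided-difference side — guaranteed by Hermite's theorem — does the work, since the Lagrange form in (2.1) is itself undefined on the diagonal while the Hermite integral form is manifestly continuous and pins down the limiting density in closed form. An alternative that sidesteps the limiting argument would be a direct induction on $k$ using the convolution integral together with the Hermite integral form in place of the Lagrange form employed in Proposition 4.1, but this is considerably more laborious and I would prefer the continuity route.
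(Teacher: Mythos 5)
Your proof is correct, but it justifies the first equality by a genuinely different route than the paper does. The paper states Proposition 6.2 as an immediate consequence of the confluent reading of $(4.1)$ together with Proposition 6.1: having extended the divided difference to repeated arguments via Hermite's theorem (Theorem 5.1) and flagged this interpretation in Remark 4.1, it simply asserts that $\lambda_1^{m_1}\lambda_2\cdots\lambda_k\, e[-\lambda_1^{(m_1)},-\lambda_2,\dots,-\lambda_k]$ is the density, and the fact that this confluent expression really is a density is verified only later and indirectly, through the m.g.f.\ computation in Lemma 7.1, which confirms the more general Proposition 6.3 (and hence 6.2) by uniqueness of moment generating functions. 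You instead prove the first equality directly by coalescence: perturb the $m_1$ repeated parameters to pairwise distinct values $\mu_1,\dots,\mu_{m_1}$ (you should say explicitly that these are also chosen distinct from $\lambda_2,\dots,\lambda_k$, though this is a trivial fix), apply Proposition 4.1, and pass to the limit, with the probabilistic side controlled by continuity of the finite convolution in its parameters (dominated convergence over the compact simplex $\{u_i\ge 0,\ \sum u_i\le t\}$ does make this rigorous) and the analytic side controlled by the continuity of the Hermite integral form at coincident arguments. Your route buys a self-contained proof of Proposition 6.2 that does not lean on Section 7 or on m.g.f.\ uniqueness, and it pins down pointwise convergence of the densities rather than mere convergence in distribution; the paper's route buys the full sum-of-$k$-Erlangs case in one stroke and avoids the continuity-of-convolution lemma altogether. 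The second equality is handled identically in both treatments: a single application of Proposition 6.1 with $f=e$, $a_1=-\lambda_1$ and $k_1=m_1-1$, exactly as you do.
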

\noindent
A closed-form formula for the density of the sum $Y$ may now be derived as follows:
\begin{equation}
\begin{split}
f_Y(t)&=\frac{\lambda_1^{m_1}\lambda_2\dots\lambda_k}{(m_1-1)!}.\frac{\partial^{m_1 - 1}}{\partial (-\lambda_1)^{m_1-1}}e[-\lambda_1, -\lambda_2 \dots, -\lambda_k] \\
&=\frac{\lambda_1^{m_1}\lambda_2\dots\lambda_k}{(m_1-1)!}.\frac{\partial^{m_1 - 1}}{\partial (-\lambda_1)^{m_1-1}}\Big(\frac{e^{-\lambda_1t}}{\prod\limits_{q=2}^k (\lambda_q-\lambda_1)}+\sum_{j=2}^k\frac{e^{-\lambda_jt}}{\prod\limits_{\substack{q=1 \\ q\neq j}}^k (\lambda_q-\lambda_j)}\Big) \\
&=\frac{\lambda_1^{m_1}\lambda_2\dots\lambda_k}{(m_1-1)!}\Big(\sum\limits_{\substack{\sum_{j=1}^k r_j=m_1-1\\r_i \ge 0}}\frac{(m_1-1)!}{r_1! \dots r_k!}e^{-\lambda_1t}t^{r_1}\prod\limits_{q=2}^k\frac{(-1)^{r_q}r_q!}{(\lambda_q-\lambda_1)^{r_q+1}} + \sum_{j=2}^k\frac{e^{-\lambda_jt}(m_1-1)!}{(\lambda_1-\lambda_j)^{m_1}\prod\limits_{\substack{q=2 \\ q\neq j}}^k(\lambda_q-\lambda_j)}\Big) \\
& \quad \text{ (using the general Leibniz rule, see Appendix A.2)} \\
&=\lambda_1^{m_1}\lambda_2\dots\lambda_k\Big(e^{-\lambda_1t}(-1)^{m_1-1}\sum\limits_{\substack{\sum_{j=1}^k r_j=m_1-1\\r_i \ge 0}}\frac{(-t)^{r_1}}{r_1!\prod\limits_{q=2}^k(\lambda_q-\lambda_1)^{r_q+1}} + \sum_{j=2}^k\frac{e^{-\lambda_jt}}{(\lambda_1-\lambda_j)^{m_1}\prod\limits_{\substack{q=2 \\ q\neq j}}^k(\lambda_q-\lambda_j)}\Big),
\end{split}
\end{equation}
for $t \ge 0$. 

For the case $k=2$ and general $m_1$, the density may also be found in the following succinct manner. Set $r=m_1-1$. Then using Proposition 6.2,
\begin{equation}
\begin{split}
f_Y(t)&=\frac{\lambda_1^{r+1}\lambda_2}{r!}\frac{d^r}{d(-\lambda_1)^r}e[-\lambda_1, -\lambda_2]\\
&=\frac{\lambda_1^{r+1}\lambda_2e^{-\lambda_2t}}{r!}\frac{d^r}{dx^r}\Big(\frac{t(1-e^{-x})}{x}\Big)\Big(\frac{dx}{d(-\lambda_1)}\Big)^r, \text{ for } x=(\lambda_1-\lambda_2)t\\
&=\frac{\lambda_1^{r+1}\lambda_2e^{-\lambda_2t}} {r!}\frac{d^r}{dx^r}\Big(\frac{t\gamma(1,x)}{x}\Big)(-t)^r=\frac{\lambda_1^{r+1}\lambda_2e^{-\lambda_2t}} {r!}\frac{\gamma(r+1,x)}{x^{r+1}}t^{r+1} \quad \text{ (using (3.8)) }\\
&=\frac{\lambda_1^{m_1}\lambda_2e^{-\lambda_2t}\gamma(m_1,(\lambda_1-\lambda_2)t)}{(m_1-1)!(\lambda_1-\lambda_2)^{m_1}}, \quad \text{ for } t > 0
\end{split}
\end{equation}
and $\lambda_1>\lambda_2$. For the instance $\lambda_2>\lambda_1$, replace $\gamma(m_1,(\lambda_1-\lambda_2)t)$ by a corresponding confluent hypergeometric representation, (3.7) above. The equivalence with (6.3) is seen once we rearrange terms there (for $k=2$) to find
\begin{equation}\notag
\begin{split}
f_Y(t)&=\lambda_1^{m_1}\lambda_2\Big(\frac{e^{-\lambda_2t}}{(\lambda_1-\lambda_2)^{m_1}} -e^{-\lambda_1t}\sum_{r=0}^{m_1-1}\frac{t^r}{r!(\lambda_1-\lambda_2)^{m_1-r}} \Big) \\
&=\lambda_1^{m_1}\lambda_2\frac{e^{-\lambda_2t}}{(\lambda_1-\lambda_2)^{m_1}}\Big(1 -e^{-(\lambda_1-\lambda_2)t}\sum_{r=0}^{m_1-1}\frac{[(\lambda_1-\lambda_2)t]^r}{r!} \Big) 
\end{split}
\end{equation}
and (6.4) follows after applying the finite series representation for $\gamma(n,z)$.

\subsection{Sums of exponentials in general}
Returning to Proposition 6.1, we may further take partial derivatives w.r.t. $a_2$ and write:
\begin{equation}
\begin{split}
\frac{\partial^{k_1+k_2}}{\partial a_1^{k_1}\partial a_2^{k_2}}f[a_1,a_2, \dots, a_m]&=k_1!\frac{\partial^{k_2}}{\partial a_2^{k_2}}f[a_1^{(k_1+1)}, a_2,\dots,a_m] \\
&=k_1!k_2!f[a_1^{(k_1+1)}, a_2^{(k_2+1)},\dots,a_m].
\end{split}
\end{equation}
The second equality in (6.5) follows by applying a similar reasoning used in the proof of Proposition 6.1. Repeating this for all the arguments, we state the following corollary:

\begin{cor}
Let $a_1,\dots,a_m$ be distinct and let $f(x)$ have a continuous $q$th derivative in the interval $[a_{min},a_{max}]$. Then, for integers $1<k_i \le q$, $i=1, \dots, m$,
\begin{equation}\notag
\frac{\partial^{k_1+\dots +k_m}}{\partial a_1^{k_1} \dots \partial a_m^{k_m}}f[a_1, \dots, a_m] = k_1! \dots k_m!f[a_1^{(k_1+1)}, \dots,a_m^{(k_m+1)}]. 
\end{equation}
\end{cor}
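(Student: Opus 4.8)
The plan is to regard the iterated mixed-partial operator as a composition of single-variable operators and to iterate Proposition~6.1, the one genuinely new point being that once the first variable has been processed the divided difference already carries repeated (confluent) arguments, so I must ensure the mechanism behind Proposition~6.1 survives that situation. Accordingly, I would first isolate the engine of Proposition~6.1 as a node-derivative identity valid for an \emph{arbitrary} list of nodes, distinct or not: differentiating a divided difference with respect to one of its nodes simply repeats that node,
\[
\frac{\partial}{\partial a}\,f[a,\mathbf c]=f[a,a,\mathbf c],
\]
where $\mathbf c$ is any fixed (possibly repeated) list of remaining nodes. I would justify this from Hermite's integral representation (Theorem~5.1), which writes the divided difference as an iterated integral of a derivative of $f$ over a fixed simplex and is valid with or without coincidences among the nodes; differentiating under the integral sign in $a$ and re-identifying the result through the same representation yields the divided difference with one extra copy of $a$. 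Combining this with the symmetry of divided differences noted after (3.4), I would pass to a node of multiplicity $j$ by the diagonal chain rule: writing $f[a^{(j)},\mathbf c]$ as the diagonal restriction of the symmetric divided difference in $j$ nominally independent copies, each of the $j$ slots contributes one term, so that
\[
\frac{\partial}{\partial a}\,f[a^{(j)},\mathbf c]=j\,f[a^{(j+1)},\mathbf c].
\]
This is exactly the reasoning the text alludes to when it says the second equality in (6.5) follows ``by applying a similar reasoning used in the proof of Proposition~6.1,'' now phrased so as not to depend on distinctness of the remaining arguments.

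Next I would iterate this identity within a single variable to recover the factorial: applying it $k_i$ times in succession to $a_i$ peels off the multiplicities $1,2,\dots,k_i$, giving
\[
\frac{\partial^{k_i}}{\partial a_i^{k_i}}\,f[\dots,a_i,\dots]=k_i!\,f[\dots,a_i^{(k_i+1)},\dots],
\]
which is Proposition~6.1, now admissible even when the omitted arguments are themselves repeated. I would then prove the corollary by induction on the number $m$ of variables being differentiated. Since $f$ is assumed to possess enough continuous derivatives to support the highest divided difference that appears, namely $f[a_1^{(k_1+1)},\dots,a_m^{(k_m+1)}]$, which is a $\big(m-1+\sum_i k_i\big)$th-order divided difference, all the mixed partials are continuous, so by Schwarz's theorem they commute and may be carried out one variable at a time in any order. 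Applying the single-variable result to $a_1$ yields the factor $k_1!$ and replaces $a_1$ by $a_1^{(k_1+1)}$; treating these $k_1+1$ copies as part of the fixed list $\mathbf c$, I apply it to $a_2$ to pick up $k_2!$, and so on. After all $m$ variables are processed the factorials multiply to $k_1!\cdots k_m!$ and every $a_i$ has been promoted to $a_i^{(k_i+1)}$, which is the claimed identity.

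The step I expect to be the main obstacle is the very first one: guaranteeing that the node-derivative identity, and hence Proposition~6.1, remains valid once confluent arguments are present, since Proposition~6.1 as stated presupposes distinct nodes and cannot be invoked verbatim at the second and later stages. Anchoring the whole argument on Hermite's representation is what removes this difficulty, because that formula is insensitive to coincidences among the nodes, and the symmetry of divided differences then lets me move whichever variable I am differentiating into the active position without loss of generality. A minor point worth recording is the smoothness bookkeeping: one really needs $f\in C^{\,m-1+\sum_i k_i}$ on $[a_{\min},a_{\max}]$ for the terminal divided difference to be defined, which is the precise sense in which the hypothesis on $q$ must be read.
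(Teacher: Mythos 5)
Your proposal is correct and takes essentially the same route as the paper: the paper also obtains the corollary by applying Proposition~6.1 one variable at a time (equation (6.5), then ``repeating this for all the arguments''), with your diagonal chain-rule step being precisely part (b) of the appendix proof of Proposition~6.1. Your appeal to Hermite's integral representation to justify the node-derivative identity once confluent arguments are present, together with the Schwarz-commutation and smoothness bookkeeping, merely makes rigorous what the paper leaves at the remark that ``a similar reasoning'' applies.
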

\noindent
\newline
The interested reader can look further to \cite{Ostrowski1966} Ch.1 or to the exercises and hints in \cite{Schatzman2002} Ch.4. 

Let $X_1, \dots, X_n$ be $n$ independent random variables having exponential distributions with parameters from the set \{$\lambda_i; i = 1, \dots, k \le n$\}, with $\lambda_i \ne \lambda_j$ when $i \ne j$. Suppose further that $m_i$ is the number of such random variables having parameter $\lambda_i$ so that $m_1+ \dots +m_k = n$. Then $Y=X_1+ \dots +X_n$ is the sum of $k$ independent random variables having the Erlang distribution with parameter $(m_i, \lambda_i)$ for $i=1, \dots, k$. Applying Corollary 6.1, the following result is then immediately apparent:
\begin{pro}
(A General Representation). Let $Y=Z_1+ \dots +Z_k$ be the sum of $k$ independent random variables having Erlang distributions with parameters, respectively, $(m_i, \lambda_i)$ for $i=1, \dots, k$. For $e(x)=e^{xt}$, the density for $Y$, $f_Y(t)$, for $t \ge 0$ is given by 
\begin{equation}
\begin{split}
f_Y(t)&=\Big(\prod_{i=1}^k \lambda_i^{m_i}\Big) e[-\lambda_1^{(m_1)}, \dots, -\lambda_k^{(m_k)}] \\
&=\prod_{i=1}^k \frac{\lambda_i^{m_i}}{(m_i-1)!}.\frac{\partial^{n - k}}{\partial (-\lambda_1)^{m_1-1} \dots \partial (-\lambda_k)^{m_k-1}}e[-\lambda_1, \dots, -\lambda_k],
\end{split}
\end{equation}
where $n= m_1+ \dots + m_k$.
\end{pro}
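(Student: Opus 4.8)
The plan is to prove the two displayed equalities separately: the second follows mechanically from Corollary 6.1, whereas the first --- that the confluent divided difference reproduces the density of the Erlang sum --- carries the probabilistic content and is the crux. For the first equality I would argue by continuity in the rate parameters. Regard $Y$ as the sum of $n=m_1+\dots+m_k$ independent exponentials in which $m_i$ of the rates equal $\lambda_i$, and perturb these to a configuration $\mu_1,\dots,\mu_n$ of pairwise distinct values. Proposition 4.1 then gives the perturbed density exactly as $\big(\prod_{j=1}^n\mu_j\big)\,e[-\mu_1,\dots,-\mu_n]$. Now let the $\mu_j$ coalesce back onto $\lambda_1,\dots,\lambda_k$ with the prescribed multiplicities.

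On the right-hand side, $e[-\mu_1,\dots,-\mu_n]\to e[-\lambda_1^{(m_1)},\dots,-\lambda_k^{(m_k)}]$: for the entire function $e(x)=e^{xt}$, the Hermite representation (Theorem 5.1) exhibits the divided difference as an integral of $e^{(n-1)}$ over a fixed simplex with integrand depending continuously on the nodes, so it is a continuous (indeed smooth) function of the nodes right through the confluent configuration. It remains to show the \emph{density} is continuous in the rates. I would read the density as the $n$-fold convolution of the factors $\mu_j e^{-\mu_j u}$; each is continuous in $\mu_j$ and, for $\mu_j$ in a compact neighbourhood, is dominated by an integrable envelope on $[0,t]$, so dominated convergence delivers continuity in $(\mu_1,\dots,\mu_n)$ for each fixed $t$. (Equivalently, $\prod_j \mu_j/(\mu_j+s)\to\prod_i(\lambda_i/(\lambda_i+s))^{m_i}$, the Laplace transform of the Erlang sum.) Hence the left-hand side tends to $f_Y(t)$ while the right-hand side tends to $\big(\prod_i\lambda_i^{m_i}\big)\,e[-\lambda_1^{(m_1)},\dots,-\lambda_k^{(m_k)}]$, giving the first equality.

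The second equality is then immediate from Corollary 6.1. Taking $f=e$, distinct nodes $a_i=-\lambda_i$ $(i=1,\dots,k)$ and exponents $k_i=m_i-1\ge 0$ (with $k_i=0$ understood as no differentiation in $a_i$, consistent with $0!=1$ and $a_i^{(1)}=a_i$), the corollary reads
\begin{equation}\notag
\frac{\partial^{\,\sum_i(m_i-1)}}{\partial(-\lambda_1)^{m_1-1}\dots\partial(-\lambda_k)^{m_k-1}}e[-\lambda_1,\dots,-\lambda_k]=\Big(\prod_{i=1}^k(m_i-1)!\Big)e[-\lambda_1^{(m_1)},\dots,-\lambda_k^{(m_k)}].
\end{equation}
Since $\sum_i(m_i-1)=n-k$, dividing by $\prod_i(m_i-1)!$ and multiplying through by $\prod_i\lambda_i^{m_i}$ reproduces the stated expression; the symmetry of the divided difference in its arguments (noted after (3.4)) makes the grouping of repeated nodes unambiguous.

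The main obstacle is the first equality, and within it the point that the \emph{true} density converges to the Erlang-sum density as the rates coalesce: the divided-difference side is handed to us as a continuous function of the nodes by Hermite's theorem, so all the analytic care goes into the probabilistic side. Once its continuity in the parameters is secured, Proposition 4.1 supplies the identity on the dense distinct-rate set and Corollary 6.1 converts the confluent divided difference into the partial-derivative form, so the remainder is bookkeeping.
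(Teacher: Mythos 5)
Your proof is correct, but it takes a genuinely different route from the paper's. The paper's own proof of this proposition is the single remark that its correctness ``will be demonstrated via Lemma 7.1'': it substitutes the right-hand side of (6.6) into the m.g.f.\ integral (7.1), exchanges the integral with the parameter derivatives, collapses the Lagrange sum $\sum_j \big[(\lambda_j-s)\prod_{k\neq j}(\lambda_k-\lambda_j)\big]^{-1}$ into the product $\prod_j(\lambda_j-s)^{-1}$ via the partial-fraction identity (3.6) of Lemma 3.2, and differentiates factor by factor to recover $\prod_i\big(\lambda_i/(\lambda_i-s)\big)^{m_i}$, concluding by uniqueness of the m.g.f. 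Your coalescence argument --- Proposition 4.1 on the dense set of distinct rates, continuity of the confluent divided difference in its nodes via Hermite's integral form, and dominated convergence on the compact convolution simplex for continuity of the true density in the rates --- instead gives a direct pointwise proof of the first equality, which the paper only gestures at (``immediately apparent''), and it is more self-contained in that it avoids m.g.f.\ uniqueness altogether. What the paper's transform route buys is reusability: in Section 8 the multiplicities $\alpha_i-1$ are noninteger, no coalescence of distinct exponentials is available, and the identical Lemma 7.1 computation carries over verbatim with fractional derivatives to prove Proposition 8.1. Two minor points in your favour: you are right to flag the $k_i=m_i-1=0$ convention, since Corollary 6.1 as printed requires $1<k_i$ (surely a typo, and Proposition 6.1 covers differentiation in each single variable anyway); and your parenthetical ``equivalently'' via Laplace transforms would not on its own yield pointwise convergence of densities (convergence of transforms gives only weak convergence), but since your dominated-convergence argument does the real work, nothing is missing.
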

\begin{proof}
The correctness of this representation will also be demonstrated via Lemma 7.1 below.
\end{proof}
\noindent
This particular representation for the sum of independent Erlang distributed random variables appears to be novel. A closed-form expression for (6.6) is easily found with Lemma 6.1.
\begin{lem}
Let $a_1,\dots,a_m$ be distinct and let $f(x)$ have a continuous $q$th derivative in the interval $[a_{min},a_{max}]$. Then, for integers $1<k_i \le q$, $i=1, \dots, m$, and $k=k_1+ \dots +k_m$:
\begin{equation}\notag
\frac{\partial^k}{\partial a_1^{k_1} \dots \partial a_m^{k_m}}f[a_1, \dots, a_m] = \sum_{i=1}^mk_i!\sum\limits_{\substack{\sum_{j=1}^m r_j=k_i\\r_j \ge 0}}\frac{f^{(r_i)}(a_i)(-1)^{k_i-r_i}}{r_i!}\prod\limits_{\substack{q=1 \\ q\neq i}}^m\frac{(k_q+r_q)!}{(a_i-a_q)^{k_q+r_q+1}r_q!}.
\end{equation}
\end{lem}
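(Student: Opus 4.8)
The plan is to obtain the identity by differentiating the Lagrange representation (3.4) of the simple-node divided difference directly, since the left-hand side is precisely the mixed partial derivative of $f[a_1,\dots,a_m]$ in the distinct nodes. By linearity it suffices to differentiate each summand $T_i := f(a_i)\big/\prod_{q\neq i}(a_i-a_q)$ separately and then sum over $i$. The key structural observation is that $T_i$ depends on the node $a_j$ (for $j\neq i$) only through the single factor $(a_i-a_j)^{-1}$, whereas it depends on $a_i$ through $f(a_i)$ and through every factor $(a_i-a_q)^{-1}$. Since $f\in C^q$ and the rational factors are smooth at distinct nodes, the mixed partials up to total order $k$ exist, are continuous, and hence commute; I would therefore differentiate in two stages, first all the ``off-diagonal'' derivatives $\partial_{a_j}^{k_j}$ ($j\neq i$), then the ``diagonal'' derivative $\partial_{a_i}^{k_i}$.

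First I would apply $\partial_{a_j}^{k_j}$ for each $j\neq i$. Holding $a_i$ fixed gives $\partial_{a_j}^{k_j}(a_i-a_j)^{-1}=k_j!\,(a_i-a_j)^{-(k_j+1)}$, and because these act on independent factors the summand becomes $\big(\prod_{q\neq i}k_q!\big)\,f(a_i)\prod_{q\neq i}(a_i-a_q)^{-(k_q+1)}$.

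Next I would apply $\partial_{a_i}^{k_i}$ to this product of the $m$ functions $f(a_i)$ and $(a_i-a_q)^{-(k_q+1)}$, $q\neq i$, via the generalised Leibniz rule (as cited for (6.3) in the Appendix). Distributing the $k_i$ derivatives with exponents $r_i$ on $f$ and $r_q$ on the $q$th factor, subject to $r_i+\sum_{q\neq i}r_q=k_i$, and using $\partial_{a_i}^{r_q}(a_i-a_q)^{-(k_q+1)}=(-1)^{r_q}\tfrac{(k_q+r_q)!}{k_q!}(a_i-a_q)^{-(k_q+r_q+1)}$ together with $\prod_{q\neq i}(-1)^{r_q}=(-1)^{k_i-r_i}$, the prefactor $\prod_{q\neq i}k_q!$ from the first stage cancels the $\prod_{q\neq i}\tfrac{1}{k_q!}$ produced here, leaving exactly the $i$th summand of the claimed formula. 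Summing over $i$ finishes the argument.

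Each step is individually elementary, so the main obstacle is purely organisational: keeping the two families of factorials straight and checking that the multinomial coefficient $k_i!/(r_i!\prod_{q\neq i}r_q!)$, the power-rule factorials $(k_q+r_q)!/k_q!$, and the stage-one factors $\prod_{q\neq i}k_q!$ recombine precisely into $k_i!\,\tfrac{1}{r_i!}\prod_{q\neq i}\tfrac{(k_q+r_q)!}{r_q!}$, with the correct sign $(-1)^{k_i-r_i}$ and the correct constraint $\sum_{j=1}^m r_j=k_i$. As an independent consistency check one may instead invoke Corollary 6.1 to rewrite the left-hand side as $\big(\prod_i k_i!\big)f[a_1^{(k_1+1)},\dots,a_m^{(k_m+1)}]$ and use the partial-fraction (residue) representation of a divided difference with repeated nodes, $f[a_1^{(k_1+1)},\dots,a_m^{(k_m+1)}]=\sum_i \tfrac{1}{k_i!}\big[\tfrac{d^{k_i}}{dx^{k_i}}\,f(x)\prod_{q\neq i}(x-a_q)^{-(k_q+1)}\big]_{x=a_i}$; expanding this by the same Leibniz computation reproduces the identity and confirms agreement with the representation in Proposition 6.3.
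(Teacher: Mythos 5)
Your proposal is correct and takes essentially the same route as the paper's own proof in the Appendix: differentiate the Lagrange representation (3.4) term by term, noting that each summand depends on $a_j$ ($j\neq i$) only through the single factor $(a_i-a_j)^{-1}$, apply those off-diagonal derivatives first to obtain $f(a_i)\prod_{q\neq i}k_q!\,(a_i-a_q)^{-(k_q+1)}$, and then expand $\partial_{a_i}^{k_i}$ by the generalised Leibniz rule with the identical factorial cancellation and sign bookkeeping $\prod_{q\neq i}(-1)^{r_q}=(-1)^{k_i-r_i}$. Your closing consistency check via Corollary 6.1 and the repeated-node residue form is a sensible extra but is not needed, and is not part of the paper's argument.
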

\begin{proof}
See Appendix A.2. 
\end{proof}
\noindent
We may now state the following corollary:
\begin{cor}
Let $Y=Z_1+ \dots +Z_k$ be the sum of $k$ independent random variables having the Erlang distribution with parameter $(m_i, \lambda_i)$ for $i=1, \dots, k$ and $\lambda_i \ne \lambda_j$ for $i \ne j$. Then the density for $Y$, $f_Y(t)$, for $t \ge 0$ is given by
\begin{equation}
f_Y(t)=\prod_{i=1}^k \lambda_i^{m_i}\Big\{\sum_{i=1}^k\frac{e^{-\lambda_it}(-1)^{m_i-1}}{\prod\limits_{\substack{j=1 \\ j\neq i}}^k (m_j-1)!} \times \sum\limits_{\substack{\sum_{j=1}^k r_j=m_i-1\\r_j \ge 0}}\frac{(-t)^{r_i}}{r_i!}\prod\limits_{\substack{q=1 \\ q\neq i}}^k\frac{(m_q+r_q-1)!}{(\lambda_q-\lambda_i)^{m_q+r_q}r_q!}\Big\}.
\end{equation}
\end{cor}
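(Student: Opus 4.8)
The plan is to obtain the closed-form expression (6.7) by a direct specialisation of Lemma 6.1, using the divided-difference representation of $f_Y(t)$ already established in Proposition 6.3. That proposition gives
\[
f_Y(t)=\prod_{i=1}^k \frac{\lambda_i^{m_i}}{(m_i-1)!}\cdot\frac{\partial^{n-k}}{\partial(-\lambda_1)^{m_1-1}\cdots\partial(-\lambda_k)^{m_k-1}}e[-\lambda_1,\dots,-\lambda_k],
\]
so the whole task reduces to evaluating this mixed partial derivative of a divided difference, which is precisely what Lemma 6.1 supplies for a general $f$ and arbitrary orders $k_i$. First I would fix the dictionary between the two statements by taking $m=k$, $a_i=-\lambda_i$, $k_i=m_i-1$ and $f=e$ in Lemma 6.1.

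With that identification I would record the substitutions that drive the computation. Since $e^{(r)}(x)=t^r e^{xt}$, we have $f^{(r_i)}(a_i)=t^{r_i}e^{-\lambda_i t}$; the differences become $a_i-a_q=\lambda_q-\lambda_i$; and the factorial and exponent indices simplify through $k_q+r_q+1=m_q+r_q$ and $(k_q+r_q)!=(m_q+r_q-1)!$, while the summation constraint $\sum_j r_j=k_i$ becomes $\sum_j r_j=m_i-1$. The one manipulation needing attention is the sign bookkeeping: the factor $f^{(r_i)}(a_i)(-1)^{k_i-r_i}/r_i!=t^{r_i}e^{-\lambda_i t}(-1)^{m_i-1-r_i}/r_i!$ should be rewritten as $e^{-\lambda_i t}(-1)^{m_i-1}(-t)^{r_i}/r_i!$, using $(-1)^{-r_i}t^{r_i}=(-t)^{r_i}$. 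Finally I would absorb the leading factor $k_i!=(m_i-1)!$ produced by Lemma 6.1 into the prefactor $\prod_{j=1}^k 1/(m_j-1)!$ coming from Proposition 6.3, which leaves $1/\prod_{j\neq i}(m_j-1)!$, and pull the remaining $\prod_i\lambda_i^{m_i}$ outside the braces. Collecting these pieces reproduces (6.7) term by term.

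I do not expect a genuine obstacle, since all of the analytic content already lives in Lemma 6.1 (which in turn rests on Hermite's integral form, Theorem 5.1, and the confluent evaluation (5.1)); the present corollary is a specialisation rather than a new argument. The only points requiring care are purely algebraic: applying the identification $a_i=-\lambda_i$ consistently, so that the differences emerge as $\lambda_q-\lambda_i$ rather than $\lambda_i-\lambda_q$, and carrying out the sign rearrangement above without error. As a sanity check I would confirm that setting $m_2=\dots=m_k=1$ collapses (6.7) to the one-Erlang formula (6.3) derived earlier, since in that case each inner sum for $i\ge 2$ contracts to a single term with all $r_j=0$.
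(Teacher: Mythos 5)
Your proposal is correct and is essentially the paper's own proof: the author likewise obtains (6.7) "immediately" by applying Lemma 6.1 with $e(x)=e^{xt}$ to the representation (6.6) of Proposition 6.3 and rearranging terms. Your dictionary $a_i=-\lambda_i$, $k_i=m_i-1$, the sign rewriting $(-1)^{m_i-1-r_i}t^{r_i}=(-1)^{m_i-1}(-t)^{r_i}$, and the absorption of $k_i!=(m_i-1)!$ into the prefactor carry out exactly the rearrangement the paper leaves implicit.
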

\begin{proof}
The result is immediate after applying Lemma 6.1 to equation (6.6) and rearranging terms.
\end{proof}
\noindent
Corollary 6.2 agrees with Theorem 1 of \cite{Harrison1990}. A further manipulation of (6.7) gives:
\begin{equation}\notag
\begin{split}
f_Y(t)=&\sum_{i=1}^k\lambda_i^{m_i}e^{-\lambda_it}\sum_{n=1}^{m_i}\frac{(-1)^{m_i-n}t^{n-1}}{(n-1)!} \\
&\quad \times \sum\limits_{\substack{\sum_{j=1}^k r_j=m_i-n\\j \ne i, r_j \ge 0}}\prod\limits_{\substack{q=1 \\ q\neq i}}^k \binom{m_q+r_q-1)}{r_q}\frac{\lambda_q^{m_q}}{(\lambda_q-\lambda_i)^{m_q+r_q}},
\end{split}
\end{equation}
agreeing with Theorem 1 of \cite{Jasiulewicz2003}. 

Consider the case $k=2$, where $Z_1$ and $Z_2$ are independent variables and have the densities $Erl_{m_1,\lambda_1}(t)$ and $Erl_{m_2,\lambda_2}(t)$, respectively, with $\lambda_1 \ne \lambda_2$. Using (6.6), we may determine the density for $Y=Z_1+Z_2$ and generalise (6.4) directly as follows:
\begin{equation}
\begin{split}
f_Y(t)&=\lambda_1^{m_1}\lambda_2^{m_2} e[-\lambda_1^{(m_1)}, -\lambda_2^{(m_2)}] \\
&=\frac{\lambda_1^{m_1}\lambda_2^{m_2}}{(m_1-1)!(m_2-1)!}.\frac{\partial^{m_1+m_2 - 2}}{\partial (-\lambda_1)^{m_1-1} \partial (-\lambda_2)^{m_2-1}}e[-\lambda_1, -\lambda_2] \\
&=\frac{\lambda_1^{m_1}\lambda_2^{m_2}}{(m_1-1)!(m_2-1)!}\frac{\partial^{m_2 - 1}}{\partial (-\lambda_2)^{m_2-1}}\Big(\frac{e^{-\lambda_2t}\gamma(m_1,(\lambda_1-\lambda_2)t)}{(\lambda_1-\lambda_2)^{m_1}}\Big) \quad \text{ (following the steps to (6.4))} \\
&=\frac{\lambda_1^{m_1}\lambda_2^{m_2}t^{m_1}}{m_1!(m_2-1)!}\frac{\partial^{m_2 - 1}}{\partial (-\lambda_2)^{m_2-1}}\Big(e^{-\lambda_1t}M(1,m_1+1,(\lambda_1-\lambda_2)t)\Big) \quad \text{ (using (3.7))} \\
&=\frac{\lambda_1^{m_1}\lambda_2^{m_2}t^{m_1+m_2-1}e^{-\lambda_1t}}{\Gamma(m_1+m_2)}M(m_2, m_1+ m_2, (\lambda_1-\lambda_2)t), \quad \text{ (using (3.9))}\\
&=\frac{\lambda_1^{m_1}\lambda_2^{m_2}t^{m_1+m_2-1}e^{-\lambda_2t}}{\Gamma(m_1+m_2)}M(m_1, m_1+ m_2, (\lambda_2-\lambda_1)t), \quad t>0.
\end{split}
\end{equation}
In the final step we applied Kummer's transformation.

\section{A representation for the moment generating function for sums of Erlangs}
For a continuous random variable $Y$ with density function $f_Y(t)$ for $t \ge 0$, its moment generating function (m.g.f.), $M_Y(s)$, is defined by:
\begin{equation}
M_Y(s)=\mathbb{E}(e^{st})=\int_0^\infty e^{st}f_Y(t)dt.
\end{equation}
If the m.g.f. exists then it uniquely determines the distribution. The m.g.f. for a random variable with density $Erl_{m,\lambda}(t)$ is $\lambda^m/(\lambda-s)^m$ (see \cite{Ross1997} p.65) and so when $Y$ is the sum of $k$ independent Erlang random variables with densities $Erl_{m_i,\lambda_i}(t)$, for $i=1, \dots, k$, we have:
\begin{equation}\notag
M_Y(s)= \prod_{i=1}^k \Big(\frac{\lambda_i}{\lambda_i-s}\Big)^{m_i}.
\end{equation}
\begin{lem}
Let $Y=Z_1+\dots +Z_k$ where $Z_i$'s are independent Erlang distributed random variables with densities $Erl_{m_i,\lambda_i}(t)$, $i=1, \dots, k$, respectively and $\lambda_i \ne \lambda_j$ for $i \ne j$. Then the m.g.f. of $Y$ can be characterised by:
\begin{equation}
M_Y(s)=\int_0^\infty e^{st}\prod_{i=1}^k \frac{\lambda_i^{m_i}}{(m_i-1)!}.\frac{\partial^{m - k}}{\partial (-\lambda_1)^{m_1-1} \dots \partial (-\lambda_k)^{m_k-1}}e[-\lambda_1, \dots, -\lambda_k]dt
\end{equation}
where $m=m_1+ \dots+m_k$ and divided differences are taken over the function $e(x)=e^{xt}$.
\end{lem}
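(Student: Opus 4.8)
The plan is to evaluate the integral on the right-hand side directly and to show that it collapses to the known product form $\prod_{i=1}^k \big(\lambda_i/(\lambda_i-s)\big)^{m_i}$ of the m.g.f.\ of a sum of independent Erlangs. Since an m.g.f.\ determines its distribution uniquely, carrying this out does double duty: it establishes the stated characterisation of $M_Y(s)$ and, by comparison with the definition $M_Y(s)=\int_0^\infty e^{st}f_Y(t)\,dt$, simultaneously confirms the density representation asserted in Proposition 6.3.

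First I would factor the constant $\prod_{i=1}^k \lambda_i^{m_i}/(m_i-1)!$ out of the integral and interchange the parameter-derivatives $\partial^{m-k}/\partial(-\lambda_1)^{m_1-1}\cdots\partial(-\lambda_k)^{m_k-1}$ with the $t$-integration. This is legitimate for $\mathrm{Re}(s)<\min_j\lambda_j$, where the integrand and all of its $\lambda_i$-derivatives decay exponentially in $t$ and are dominated uniformly on a compact neighbourhood of the parameters; this is the single analytic point needing care, though it is routine given the exponential decay.

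Next I would compute the core integral $\int_0^\infty e^{st}\,e[-\lambda_1,\dots,-\lambda_k]\,dt$. Writing the divided difference in its Lagrange form (3.4), namely $e[-\lambda_1,\dots,-\lambda_k]=\sum_{j=1}^k e^{-\lambda_j t}/\prod_{q\neq j}(\lambda_q-\lambda_j)$, each term integrates to $1/\big[(\lambda_j-s)\prod_{q\neq j}(\lambda_q-\lambda_j)\big]$. The resulting sum is exactly the partial-fraction identity (3.6) of Lemma 3.2, so the integral collapses to $1/\prod_{i=1}^k(\lambda_i-s)$.

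Finally, because this expression factorises across the parameters, the operator $\partial^{m_i-1}/\partial(-\lambda_i)^{m_i-1}$ acts only on the single factor $1/(\lambda_i-s)$, producing $(m_i-1)!/(\lambda_i-s)^{m_i}$; here the sign from $\partial/\partial(-\lambda_i)$ and the sign from differentiating $(\lambda_i-s)^{-1}$ cancel over the $m_i-1$ differentiations. Taking the product over $i$ gives $\prod_{i=1}^k (m_i-1)!/(\lambda_i-s)^{m_i}$, and multiplying by the prefactor cancels the factorials to return $\prod_{i=1}^k\big(\lambda_i/(\lambda_i-s)\big)^{m_i}$, which is the m.g.f.\ of the sum of independent Erlangs. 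The main obstacle is strictly the justification of differentiating under the integral sign; once Lemma 3.2 is recognised, the remaining algebra is short.
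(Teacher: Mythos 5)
Your proposal is correct and follows essentially the same route as the paper: apply the Leibniz integral rule to exchange the parameter-derivatives with the $t$-integration, expand the divided difference in Lagrange form, integrate termwise, collapse the sum via the partial-fraction identity of Lemma 3.2 to $1/\prod_{i=1}^k(\lambda_i-s)$, and let each derivative act on its own factor so the factorials cancel against the prefactor. Your added remarks on the region $\mathrm{Re}(s)<\min_j\lambda_j$ and the domination argument for differentiating under the integral sign are a welcome tightening of a point the paper leaves implicit.
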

We provide a proof of this lemma and in doing so, once more confirm the correctness of Proposition 6.3. The proof is given as it will also form the basis for a further development in the next section.
\begin{proof}
Applying the Leibniz integral rule to (7.2) and using the Lagrange polynomial representation for the divided difference term $e[-\lambda_1, \dots, -\lambda_k]$, we have:
\begin{equation}
\begin{split}
M_Y(s)&=\prod_{i=1}^k \frac{\lambda_i^{m_i}}{(m_i-1)!}.\frac{\partial^{m - k}}{\partial (-\lambda_1)^{m_1-1} \dots \partial (-\lambda_k)^{m_k-1}}\sum_{j=1}^k\frac{\int_0^\infty e^{-(\lambda_j-s)t}dt}{\prod\limits_{\substack{q=1 \\ q\neq j}}^k (\lambda_q-\lambda_j)} \\
&=\prod_{i=1}^k \frac{\lambda_i^{m_i}}{(m_i-1)!}.\frac{\partial^{m - k}}{\partial (-\lambda_1)^{m_1-1} \dots \partial (-\lambda_k)^{m_k-1}}\sum_{j=1}^k\frac{1}{(\lambda_j-s)\prod\limits_{\substack{q=1 \\ q\neq j}}^k (\lambda_q-\lambda_j)} \\
&=\prod_{i=1}^k \frac{\lambda_i^{m_i}}{(m_i-1)!}.\frac{\partial^{m - k}}{\partial (-\lambda_1)^{m_1-1} \dots \partial (-\lambda_k)^{m_k-1}} \prod_{j=1}^k\frac{1}{(\lambda_j-s)} \quad \text{ (from (3.6))} \\
&=\prod_{i=1}^k \frac{\lambda_i^{m_i}}{(m_i-1)!}. \prod_{j=1}^k \frac{d^{m_j - 1}}{d(-\lambda_j)^{m_j-1}} \frac{1}{(\lambda_j-s)}=\prod_{i=1}^k \frac{\lambda_i^{m_i}}{(m_i-1)!}. \prod_{j=1}^k\frac{(m_j-1)!}{(\lambda_i-s)^{m_j}}\\ &=\prod_{i=1}^k \Big(\frac{\lambda_i}{\lambda_i-s}\Big)^{m_i}
\end{split}
\end{equation}
as required.
\end{proof}

\section{Fractional calculus and the extension to sums of independent gamma distributed random variables}
A gamma distributed random variable $Z$ with parameter $(\alpha, \beta)$ and mean $(\alpha/ \beta)$ has density and m.g.f. given, respectively, by:
\begin{equation}\notag
G_{\alpha,\beta}(t)=\frac{\beta^{\alpha} t^{\alpha-1}e^{-\beta t}}{\Gamma(\alpha)} \text{ and } M_Z(s)=\Big(\frac{\beta}{\beta-s}\Big)^\alpha \quad \alpha, \beta>0.
\end{equation}
By correspondence, it is tempting, but wrong, to extend of (6.6) to gamma variables simply by replacing $m_i$, $(m_i-1)!$ and $\lambda_i$ by (respectively) $\alpha_i$, $\Gamma(\alpha_i)$ and $\beta_i$. This would then extend Proposition 6.3 to include a representation for the density for the sum of $k$ independent gamma random variables with parameters $(\alpha_i, \beta_i)$. Such a proposal would appear to be reasonable given that in Sec. 5 we saw that the Erlang density with parameters $(n, \lambda)$ may be found as:
\begin{equation}
Erl_{n,\lambda}(t) = \frac{\lambda^n}{(n-1)!} \Big(\frac{d^{n-1}}{d(-\lambda)^{n-1}}e^{-\lambda t}\Big)=\frac{\lambda^n t^{n-1}e^{-\lambda t}}{(n-1)!}
\end{equation}
and if, for $f(x)=e^{xt}$, $\frac{d^v}{dx^v}f(x) = t^ve^{xt}$ were true for noninteger $v > 0$ then we could similarly conclude:
\begin{equation}
G_{\alpha,\beta}(t) = \frac{\beta^{\alpha}}{\Gamma(\alpha)} \Big(\frac{d^{\alpha-1}}{d(-\beta)^{\alpha-1}}e^{-\beta t}\Big)=\frac{\beta^{\alpha} t^{\alpha-1}e^{-\beta t}}{\Gamma(\alpha)}.
\end{equation}
However, unlike the Erlang distribution, $\alpha$ is not restricted to being a positive integer and whilst $\beta^\alpha$ and $\Gamma(\alpha)$ may be obvious generalisations of $\beta^n$ and $(n-1)!$ for positive noninteger parameters, the derivative term in (8.2) needs elaborating and we will require the tools of \textit{fractional calculus} in order to formalise this extension.

The history of fractional calculus is almost as old as that of the calculus itself. However, compared with integer calculus, noninteger calculus ideas and methods are relatively unfamiliar. Its development has been comparatively slower and a reflection of this is that an account providing a systematic treatment of the subject did not appear until the publication in 1974 of the book by Oldham and Spanier \cite{Oldham1974}. The interested reader can look to \cite{Miller1993} and to \cite{Podlubny1999} for two further accessible textbooks on the subject. We will draw on these and other sources but present only the necessary definitions and results required to complete our discussion.

The most widely investigated and used definition of the fractional derivative is the \textit{Riemann-Liouville} (RL) definition (sometimes referred to as the \textit{Abel-Riemann} definition). 

Let $x \in \mathbb{R}$. For a function $f \in L^1[a,b]$, $-\infty < a<b<+\infty$, the (left-sided) RL fractional integral of order $\nu>0$ is defined as
\begin{equation}
_aI_x^{\nu}f(x)=\frac{1}{\Gamma(\nu)}\int_a^x(x-\tau)^{\nu-1}f(\tau)d\tau, \quad \text {for } x \in [a,b],
\end{equation}
where, $L^1[a,b]$ denotes the set of Lebesgue integrable functions on $[a,b]$. For completeness, ${_a}I_x^0f(x)=f(x)$. The fractional integral operator has the linearity property $_aI_x^{p}\big(bf(x)+cg(x)\big)$ = $b\big({_a}I_x^{p}f(x)\big)$+ $c\big({_a}I_x^{p}g(x)\big)$ for $b$, $c$ constants and the semigroup property $_aI_x^{p}(_aI_x^{q})$= $_aI_x^{p+q}$.

The (left-sided) RL fractional derivative of order $\nu >0$ is defined by:
\begin{equation}
\begin{split}
_aD_x^\nu f(x)&=\frac{d^m}{dx^m}\big({_a}I_x^{m-\nu} f(x)\big) \\
&=
\begin{cases}
\frac{1}{\Gamma(m-\nu)}\frac{d^m}{dx^m}\int_a^x(x-\tau)^{m-\nu-1}f(\tau)d\tau, \quad m-1< \nu < m\\
\frac{d^m}{dx^m}f(x), \quad \nu=m \in \mathbb{N}.
\end{cases}
\end{split}
\end{equation}
The definition (8.4) defines the fractional derivative of a noninteger order $\nu>0$ as a composition of fractional integration of order $m-\nu$ followed by differentiation of integer order $m$ where $m$ is the smallest integer greater than $\nu$. From the definition, $_aD_x^{\nu}\big({_aI_x^{\nu}}f(x)\big)$=$_aD_x^{m}{_aI_x^{m-\nu}}\big({_aI_x^{\nu}}f(x)\big)$ = $_aD_x^{m}\big({_aI_x^{m}}f(x)\big)$=$f(x)$ and hence $_aD_x^{\nu}$ is a left-inverse to $_aI_x^{\nu}$. However, $_aI_x^{\nu}(_aD_x^{\nu}f(x))=f(x)$ is only true when $f^{(\nu-j)}(a)=0$ for $j=1,\dots, m$, where $m-1 < \nu \le m$. When the order $\nu$ is a positive integer, $_aD_x^\nu f(x)$ is the conventional integer-order derivative. It is easily shown that the fractional derivative conforms with the linear transformation property: 
\begin{equation}\notag
_{a}D_{x}^{\nu}f(bx+c)=b^{\nu}\big({_{ab+c}}D_{y}^{\nu}f(y)\big), \quad y=bx+c, b>0.
\end{equation}
Note the inclusion of the upper and lower terminals in the notation and definitions. It can be seen that the fractional integral is always nonlocal (i.e. dependent on $a$, the lower terminal) and the fractional derivative is generally nonlocal unless it is of an integer order. For a thorough discussion see Ch.5 in \cite{Oldham1974}, alternatively \cite{Podlubny1999} Ch.2 or \cite{Miller1993} Ch.4.

We now examine the validity of (8.2) by determining $_aD_x^v f(x)$ for $f(x)=e^{xt}$ for noninteger $v>0$. Let $m-1 < v < m$ so that $\xi= m-v>0$. We have then
\begin{equation}\notag
\begin{split}
_aD_x^v e^{xt}&=\frac{d^m}{dx^m}(_aI_x^{\xi} e^{xt})=\frac{1}{\Gamma(\xi)}\frac{d^m}{dx^m}\int_a^x(x-\tau)^{\xi-1}e^{t \tau}d\tau \\
&=\frac{1}{\Gamma(\xi)}\frac{d^m}{dx^m}\frac{e^{xt}}{t^{\xi}}\int_0^{(x-a)t}u^{\xi-1}e^{-u}du \quad \text{ ($\tau \mapsto x-u/t)$} \\
&=\frac{1}{\Gamma(\xi)}\frac{d^m}{dx^m}\frac{e^{xt}}{t^{\xi}}\gamma(\xi,(x-a)t).
\end{split}
\end{equation}
Taking the lower terminal as $a =-\infty$, the \textit{Liouville} form of the RL fractional derivative, $\gamma(\xi,(x-a)t)$ $\to$ $\Gamma(\xi)$ so yielding:
\begin{equation}\notag
_{-\infty}D_x^v e^{xt}=\frac{d^m}{dx^m}\frac{e^{xt}}{t^{\xi}} = t^{m-\xi}e^{xt}=t^v e^{xt}.
\end{equation}
Hence (8.2) has a meaningful correspondence with (8.1) under the Liouville definition for fractional derivatives (sometimes also referred to as the \textit{Liouville-Weyl} definition). 

With the Liouville definition, a sufficient condition that (8.3) converge is that $f(-x)=O(x^{-\nu -\epsilon})$, $\epsilon>0$, $x \to \infty$. Integrable functions satisfying this property are sometimes referred to as functions of \textit{Liouville class}. It is straightforward to verify that $f(x)=e^{cx}$ (with $c>0$) and $f(x) = x^{-c}$ (with $0<v<c$) and $x<0$ are of Liouville class. (See \cite{Miller1993} for further discussion). 

Partial and mixed fractional derivatives under the Riemann-Liouville definition are also possible (see Sec. 24 of \cite{Samko1993} and \cite{Li2011}). We adapt Definition 3.1 in \cite{Li2011} to the Liouville form:
\begin{dfn}
The (mixed) partial Liouville fractional derivative with order $\nu=\sum_{i=1}^k \nu_i$ $(\nu_i$th order in $x_i$ direction, $i=1, \dots, k)$ is defined as follows:
\begin{equation}\notag
\begin{split}
&\frac{\partial^\nu}{\partial x_1^{\nu_1} \dots \partial x_k^{\nu_k}}g(x_1, \dots, x_k) := {_{-\infty}}D_{x_1}^{\nu_1} \dots {_{-\infty}}D_{x_k}^{\nu_k}g(x_1, \dots, x_k) \\
&=\quad \prod_{i=1}^k\frac{1}{\Gamma(m_i-\nu_i)}.\frac{\partial^m}{\partial x_1^{m_1} \dots \partial x_k^{m_k}}\int_{-\infty}^{x_1}\dots\int_{-\infty}^{x_k}\prod_{i=1}^k(x_i-\xi_i)^{\eta_i}g(\xi_1, \dots, \xi_k) d\xi_k \dots d\xi_1,
\end{split}
\end{equation}
where $m=\sum_{i=1}^k m_i$, $\eta_i = (m_i-\nu_i-1)$, $m_{i-1}< \nu_i < m_i$, and $m_i \in \mathbb{Z^+}$ for $i=1, \dots, k$.
\end{dfn}
Generalising (8.4), the mixed fractional derivative is a composition of mixed fractional integration followed by integer-order mixed partial differentiation. Note that when $g(.)$ is a completely multiplicatively separable function of its variables then the mixed partial derivative becomes an integer-order mixed partial derivative of a product of Liouville fractional integrals for each variable.

\subsection{A representation for the density for sums of independent gamma random variables}
The following lemma will assist us further in our extension to sums of independent gamma random variables.
\begin{lem}
Let $f(x)=(x+b)^{-c}$ for $x \in \mathbb{R}$ with $b$ and $c$ constants with $c \ge 1$. Then: 

(i) For $0<\nu<1$:
\begin{equation}\notag
_{-\infty}I_x^{\nu}f(x)=(-1)^{\nu}\frac{\Gamma(c-\nu)}{\Gamma(c)}(x+b)^{-(c-\nu)},
\end{equation}
where $(-1)^{\nu}$ is a complex coefficient. 

(ii) For any $0 \le m-1 < \nu < m$ with $m \in \mathbb{N}$:
\begin{equation}\notag
_{-\infty}D_x^{\nu}f(x)=(-1)^{-\nu}\frac{\Gamma(c+\nu)}{\Gamma(c)}(x+b)^{-(c+\nu)},
\end{equation}
with $(-1)^{-\nu}$ complex when $\nu$ is noninteger.
\end{lem}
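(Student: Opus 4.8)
The plan is to evaluate both quantities directly from the Liouville integral definition (8.3) with lower terminal $a=-\infty$ and the derivative definition (8.4), reducing each to a Beta-function integral and then collecting the complex phase that the negative argument of the power function forces upon us.

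For part (i) I would set $f(\tau)=\tau^{-c}$ in (8.3) and change variables $\tau=x-y$, so that $y$ runs over $(0,\infty)$ and $x-\tau=y$, giving
\begin{equation}\notag
{_{-\infty}}I_x^{\nu}(x^{-c})=\frac{1}{\Gamma(\nu)}\int_0^{\infty}y^{\nu-1}(x-y)^{-c}\,dy.
\end{equation}
Since $x<0$, the argument $x-y$ is negative throughout, so under a fixed branch I would write $(x-y)^{-c}=e^{i\pi c}(y+|x|)^{-c}$ and appeal to the standard evaluation $\int_0^{\infty}y^{\nu-1}(y+A)^{-c}\,dy=A^{\nu-c}B(\nu,c-\nu)=A^{\nu-c}\Gamma(\nu)\Gamma(c-\nu)/\Gamma(c)$ with $A=|x|$. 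Convergence at both endpoints is guaranteed by the hypotheses, since $0<\nu<1\le c$ forces $0<\nu<c$. Dividing by $\Gamma(\nu)$ leaves $\Gamma(c-\nu)/\Gamma(c)$ multiplied by $|x|^{\nu-c}$ and a $c$-dependent phase; rewriting $|x|^{\nu-c}$ as $x^{-(c-\nu)}$ under the same branch cancels the $c$-dependent part and isolates the factor $(-1)^{\nu}=e^{i\nu\pi}$, which is the claimed formula.

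For part (ii) I would unfold definition (8.4): with $m-1<\nu<m$ and $\xi=m-\nu\in(0,1)$,
\begin{equation}\notag
{_{-\infty}}D_x^{\nu}(x^{-c})=\frac{d^m}{dx^m}\big({_{-\infty}}I_x^{\,m-\nu}(x^{-c})\big).
\end{equation}
Applying part (i) with order $m-\nu$ (permissible because $0<m-\nu<1\le c$) produces $(-1)^{m-\nu}\frac{\Gamma(c-m+\nu)}{\Gamma(c)}x^{-(c-m+\nu)}$. I would then differentiate $m$ times with the ordinary power rule $\frac{d^m}{dx^m}x^{\beta}=\frac{\Gamma(\beta+1)}{\Gamma(\beta-m+1)}x^{\beta-m}$, which one checks remains valid on $x<0$ for a consistently chosen branch. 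Taking $\beta=m-c-\nu$ lands on the target exponent $-(c+\nu)$ and contributes the Gamma factor $\Gamma(m+1-c-\nu)/\Gamma(1-c-\nu)$.

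The consolidation of the Gamma functions is the heart of the argument. I would pair $\Gamma(c-m+\nu)$ with $\Gamma(m+1-c-\nu)$ through the reflection formula $\Gamma(z)\Gamma(1-z)=\pi/\sin(\pi z)$ at $z=c-m+\nu$, use $\sin(\pi(c+\nu)-\pi m)=(-1)^m\sin(\pi(c+\nu))$, and then apply reflection a second time to reinstate $\Gamma(c+\nu)$ from $\Gamma(1-c-\nu)$; the sines cancel and the whole product collapses to $(-1)^m\Gamma(c+\nu)$. Folding in the phase $(-1)^{m-\nu}$ and using $(-1)^{2m}=1$ then yields the coefficient $(-1)^{\nu}\Gamma(c+\nu)/\Gamma(c)$ together with $x^{-(c+\nu)}$. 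The main obstacle throughout is the consistent bookkeeping of the multivalued power $x^{-c}$ on the negative half-line: the key structural point is that the $c$-dependent phase introduced by the negative argument appears identically in the input and in the output power, so it cancels and only the order-dependent factor $(-1)^{\nu}=e^{i\nu\pi}$ survives (its sign being pinned by the branch convention). The reflection formula is exactly the device that turns the superficially awkward product of Gamma functions in part (ii) into the clean ratio $\Gamma(c+\nu)/\Gamma(c)$.
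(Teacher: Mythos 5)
Your proof is correct and follows the same skeleton as the paper's: part (i) is reduced to a Beta-function evaluation, and part (ii) is obtained by applying part (i) at order $m-\nu$ (permissible since $0<m-\nu<1\le c$) and then differentiating $m$ times, with the final phase bookkeeping $(-1)^{m-\nu}(-1)^{m}=(-1)^{2m-\nu}\equiv(-1)^{\nu}$ handled exactly as in the paper. There are two local differences worth noting. In (i) you substitute $\tau=x-y$ and quote the half-line integral $\int_0^\infty y^{\nu-1}(y+A)^{-c}\,dy=A^{\nu-c}B(\nu,c-\nu)$, whereas the paper maps the integral directly onto $[0,1]$ via $t\mapsto x/(1-u)$ and reads off $B(c-\nu,\nu)$; these are the same computation, and your structural observation --- that the $c$-dependent phase from $(x-y)^{-c}$ cancels against the one from rewriting $|x|^{\nu-c}$ as $x^{-(c-\nu)}$, leaving only $e^{i\nu\pi}$ --- is precisely the right point. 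In (ii), however, your route through Euler's reflection formula is a detour the paper avoids: the paper writes the $m$th derivative of the negative power directly as
\begin{equation}\notag
\frac{d^{m}}{dx^{m}}x^{-(c-m+\nu)}=(-1)^{m}\frac{\Gamma(c+\nu)}{\Gamma(c-m+\nu)}\,x^{-(c+\nu)},
\end{equation}
so that $\Gamma(c-m+\nu)$ from part (i) cancels immediately with no trigonometric identities at all. Your version, writing the derivative factor as $\Gamma(\beta+1)/\Gamma(\beta-m+1)$ with $\beta=m-c-\nu$ and then applying reflection twice, is equivalent generically but degenerates when $c+\nu\in\mathbb{Z}$ (possible for noninteger $c$, e.g. $c=3/2$, $\nu=1/2$, $m=1$): there $\sin(\pi(c+\nu))=0$ and both $\Gamma(1-c-\nu)$ and $\Gamma(m+1-c-\nu)$ sit at poles, so "the sines cancel" only makes sense as a limit. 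The fix is trivial --- read the Gamma ratio as the falling factorial $\beta(\beta-1)\cdots(\beta-m+1)$, which is exactly the form the paper uses --- but as written your reflection step needs the proviso $c+\nu\notin\mathbb{Z}$ or a continuity argument to cover that case.
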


\begin{proof}
(i) We note that with $0 < \nu < 1$, $\nu<c$. The Liouville fractional integral (definition (8.3) with $a=-\infty$) gives:
\begin{equation}\notag
\begin{split}
_{-\infty}I_x^{\nu}f(x)&=\frac{1}{\Gamma(\nu)}\int_{-\infty}^x (x-\tau)^{\nu-1}(\tau+b)^{-c}d\tau \\
&=\frac{(x+b)^{\nu-c}(-1)^\nu}{\Gamma(\nu)}\int_0^1 (1-u)^{c-\nu-1}u^{\nu-1}du \quad \text{ }(\tau \mapsto (x+bu)/(1-u) ) \\
&=\frac{(x+b)^{\nu-c}(-1)^\nu}{\Gamma(\nu)}B(c-\nu, \nu) \\
&=\frac{(-1)^{\nu}\Gamma(c-\nu)}{\Gamma(c)}(x+b)^{-(c-\nu)},
\end{split}
\end{equation}
using the definition and properties of the beta function. 

(ii) We note, with $\nu > 0$ and $m-1< \nu < m$, $(m-\nu)<c$. The Liouville fractional derivative (definition (8.4) with $a=-\infty$) gives:
\begin{equation}\notag
_{-\infty}D_x^{\nu}f(x)=\frac{d^m}{dx^m}\big(_{-\infty}I_x^{m-\nu} f(x)\big) =\frac{(-1)^{m-\nu}\Gamma(c-m+\nu)}{\Gamma(c)}\frac{d^m}{dx^m}(x+b)^{-(c-m+\nu)},
\end{equation}
from (i) above. The $m$th order derivative of $(x+b)^{-(c-m+\nu)}$ w.r.t. $x$ can be expressed as:
\begin{equation}\notag
		\frac{d^{m}}{dx^{m}}(x+b)^{-(c-m+\nu)} =\frac{(-1)^{-m}\Gamma(c+\nu)}{\Gamma(c-m+\nu)}(x+b)^{-(c+\nu)}
\end{equation}
and the lemma follows after substitution and cancelling.
\end{proof}

\begin{pro}
Let $Z_1, \dots, Z_k$ be $k$ independent random variables with $Z_i$ having the gamma density $G_{\alpha_i, \beta_i}(t), i = 1, \dots, k$ and $\beta_i \ne \beta_j$ for $i \ne j$. Let $e[-\beta_1,\ldots,-\beta_k]$ be the divided difference for $e(x)=e^{xt}$ at points $-\beta_i$ $(i=1, \dots, k)$ then, at least for the Liouville definition of fractional derivatives, we may say that $Y=Z_1+ \dots +Z_k$ has density, $f_Y(t)$, given by:
\begin{equation}
f_Y(t)=\prod_{i=1}^k \frac{\beta_i^{\alpha_i}}{\Gamma(\alpha_i)}.{_{-\infty}}D_{-\beta_1}^{\alpha_1-1} \dots{ _{-\infty}}D_{-\beta_k}^{\alpha_k-1}e[-\beta_1, \dots, -\beta_k].
\end{equation}
\end{pro}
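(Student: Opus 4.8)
The plan is to follow the strategy used to establish Lemma 7.1: rather than manipulate the proposed density (8.5) directly, I would verify it by computing the moment generating function it implies and matching this against the known m.g.f. of the gamma convolution. Since the m.g.f., when it exists, uniquely determines the distribution (Section 7), and since independence gives $M_Y(s)=\prod_{i=1}^k(\beta_i/(\beta_i-s))^{\alpha_i}$, it suffices to show that the right-hand side of (8.5), integrated against $e^{st}$ over $t\ge 0$, reproduces this product. This keeps the argument parallel to the computation (7.3) and reuses the same two ingredients: the Lagrange representation (3.4) of the divided difference and the partial-fraction identity (3.6).

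Concretely, I would write $M_Y(s)=\int_0^\infty e^{st}f_Y(t)\,dt$, pull out the constant factors $\beta_i^{\alpha_i}/\Gamma(\alpha_i)$, and interchange the $t$-integration with the mixed Liouville derivative ${_{-\infty}}D_{-\beta_1}^{\alpha_1-1}\cdots{_{-\infty}}D_{-\beta_k}^{\alpha_k-1}$. Expanding $e[-\beta_1,\dots,-\beta_k]$ by (3.4) and using $\int_0^\infty e^{-(\beta_j-s)t}\,dt = 1/(\beta_j-s)$ (valid for $s<\min_i\beta_i$) collapses the integral to $\sum_{j=1}^k \big[(\beta_j-s)\prod_{q\ne j}(\beta_q-\beta_j)\big]^{-1}$, which by (3.6) equals the separable product $\prod_{j=1}^k (\beta_j-s)^{-1}$. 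Because this product is completely multiplicatively separable in the variables $-\beta_j$, the remark following Definition 8.1 lets the mixed fractional derivative factor as $\prod_{j=1}^k {_{-\infty}}D_{-\beta_j}^{\alpha_j-1}(\beta_j-s)^{-1}$.

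The next step applies Lemma 8.1. Writing $x=-\beta_j$, each factor is $-(x+s)^{-1}$, and since the Liouville derivative (lower terminal $-\infty$) commutes with the translation $x\mapsto x+s$, Lemma 8.1(ii) with $c=1$ and $\nu=\alpha_j-1$ gives ${_{-\infty}}D_{-\beta_j}^{\alpha_j-1}(\beta_j-s)^{-1}=\Gamma(\alpha_j)(\beta_j-s)^{-\alpha_j}$. Here I would track the complex coefficients carefully: the factor $(-1)^{\alpha_j-1}$ from Lemma 8.1, the $(-1)^{-\alpha_j}$ produced by writing $(x+s)^{-\alpha_j}=(-(\beta_j-s))^{-\alpha_j}$, and the leading minus sign combine via $(-1)^\nu=e^{i\nu\pi}$ to a real $+1$. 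Collecting everything then yields $M_Y(s)=\prod_{i=1}^k \beta_i^{\alpha_i}(\beta_i-s)^{-\alpha_i}=\prod_{i=1}^k(\beta_i/(\beta_i-s))^{\alpha_i}$, and uniqueness of the transform on the strip $s<\min_i\beta_i$ then identifies (8.5) as the density of $Y$.

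I expect two points to require the most care. The first is justifying the interchange of the improper $t$-integral with the nonlocal fractional operators, whose defining integrals themselves run over $(-\infty,-\beta_j)$; this is where the Liouville-class decay of the integrands and a Fubini/dominated-convergence argument on the convergence strip must be invoked, together with the translation property of ${_{-\infty}}D$ used above, which I would verify by the substitution $\tau\mapsto\tau+s$ in (8.3). The second is the bookkeeping of the $e^{i\nu\pi}$ factors: although each single-variable fractional derivative is genuinely complex-valued, the final m.g.f. (and hence the density) must be real, so the cancellation of these phases is the delicate algebraic heart of the argument.
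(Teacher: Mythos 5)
Your proposal matches the paper's own proof essentially step for step: both verify (8.5) by substituting it into the m.g.f.\ integral (7.1), reusing the Lagrange form (3.4) and partial-fraction identity (3.6) from the computation (7.3), factoring the mixed Liouville operator over the separable product $\prod_j(\beta_j-s)^{-1}$, and applying Lemma 8.1 with $c=1$ (with the same sign bookkeeping via linearity, multiplying through by $-1$ so the $e^{i\nu\pi}$ phases cancel) before invoking uniqueness of the m.g.f. The only detail you gloss over that the paper states explicitly is that for $\alpha_j<1$ the operator ${_{-\infty}}D_{-\beta_j}^{\alpha_j-1}$ must be read as the fractional integral ${_{-\infty}}I_{-\beta_j}^{1-\alpha_j}$ and handled by Lemma 8.1(i) rather than (ii) --- a trivial case split that does not change your argument.
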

\begin{proof}
Let $f_Y(t)$ denote the density function for the sum $Y$. Substituting $f_Y(t)$ from (8.5) into (7.1), the m.g.f. for $Y$ is then expressed as:
\begin{equation}\notag
\begin{split}
M_Y(s)&=\int_0^\infty e^{st}\prod_{i=1}^k \frac{\beta_i^{\alpha_i}}{\Gamma(\alpha_i)}.{_{-\infty}}D_{-\beta_1}^{\alpha_1-1} \dots{ _{-\infty}}D_{-\beta_k}^{\alpha_k-1}e[-\beta_1, \dots, -\beta_k]dt \\
&=\prod_{i=1}^k \frac{\beta_i^{\alpha_i}}{\Gamma(\alpha_i)}.{_{-\infty}}D_{-\beta_1}^{\alpha_1-1} \dots{ _{-\infty}}D_{-\beta_k}^{\alpha_k-1}\sum_{j=1}^k\frac{\int_0^\infty e^{-(\beta_j-s)t}dt}{\prod\limits_{\substack{q=1 \\ q\neq j}}^k (\beta_q-\beta_j)},
\end{split}
\end{equation}
as the exchange of order of integrals is clearly permitted. The proof continues by following the same steps as that for Lemma 7.1 except at the penultimate line of (7.3) we apply the Liouville fractional derivative definition to give instead:
\begin{equation}
M_Y(s)=\prod_{i=1}^k \frac{\beta_i^{\alpha_i}}{\Gamma(\alpha_i)}.\prod_{j=1}^k {_{-\infty}D_{-\beta_j}^{\alpha_j-1}}\Big(\frac{1}{\beta_j-s}\Big) 
\end{equation}
where, for $\alpha_j < 1$, we note that ${_{-\infty}D_{-\beta_j}^{\alpha_j-1}}$ must be interpreted as a fractional integral, i.e.
\begin{equation}\notag
{_{-\infty}D_{-\beta_j}^{\alpha_j-1}} \equiv {_{-\infty}I_{-\beta_j}^{1-\alpha_j}}, \quad \alpha_j<1.
\end{equation}
Setting $c=1$ in Lemma 8.1, we have firstly, for $0<\alpha_j<1$ using part (i) of the lemma:
\begin{equation}\notag
_{-\infty}I_x^{1-\alpha_j}\Big(\frac{1}{x+s}\Big)=(-1)^{1-\alpha_j}\Gamma(\alpha_j)(x+s)^{-\alpha_j}
\end{equation}
or, multiplying both sides by $(-1)$, (using the linearity property for $_{a}I_x^{\nu}$)
\begin{equation}\notag
_{-\infty}I_x^{1-\alpha_j}\Big(\frac{1}{-x-s}\Big)=\Gamma(\alpha_j)(-x-s)^{-\alpha_j}.
\end{equation}
It follows, for $\beta_j>s$, we can write
\begin{equation}\notag
_{-\infty}I_{-\beta_j}^{1-\alpha_j}\Big(\frac{1}{\beta_j-s}\Big)=\frac{\Gamma(\alpha_j)}{(\beta_j-s)^{\alpha_j}}.
\end{equation}
Secondly, for $\alpha_j \ge 1$, using Lemma 8.1 part (ii):
\begin{equation}\notag
_{-\infty}D_x^{\alpha_j-1}\Big(\frac{1}{x+s}\Big)=(-1)^{1-\alpha_j}\Gamma(\alpha_j)(x+s)^{-\alpha_j}.
\end{equation}
Again, it follows, for $\beta_j>s$, we can write
\begin{equation}\notag
_{-\infty}D_{-\beta_j}^{\alpha_j-1}\Big(\frac{1}{\beta_j-s}\Big)=\frac{\Gamma(\alpha_j)}{(\beta_j-s)^{\alpha_j}}.
\end{equation}
Hence, (8.6) becomes
\begin{equation}\notag
M_Y(s)=\prod_{i=1}^k \Big(\frac{\beta_i}{\beta_i-s}\Big)^{\alpha_i},
\end{equation}
which we know is the m.g.f. for $Y$. As the moment generating function is unique to the density function, the proof is completed.
\end{proof}
Proposition 8.1 extends Proposition 6.3 to independent gamma distributed random variables and for the instance of $k=1$, (8.2) is therefore shown to be a valid statement under this proposition.

\begin{rem}
Observe that for values $\{-\beta_1, \dots, -\beta_k\}$ and constant $t$, the divided differences over the functions $e(x)=e^{xt}$ and $\text{exp}(x)=e^x$ satisfy: 
\begin{equation}\notag
e[-\beta_1, \dots, -\beta_k]=t^{k-1}\text{exp}[-\beta_1t, \dots, -\beta_kt],
\end{equation}
where $\text{exp}[a_1, \dots, a_m]$ denotes the divided difference of $e^x$ at the points $\{a_1, \dots, a_m\}$. Furthermore, using the linear transformation property for the fractional derivative, we have
\begin{equation}\notag
{_{-\infty}}D_{-\beta}^{\alpha-1}f(-\beta t)=t^{\alpha-1}{_{-\infty}}D_x^{\alpha-1}f(x), \quad x=-\beta t.
\end{equation}
Hence, we have the equivalent representation for the density of $Y$:
\begin{equation}
f_Y(t)=\frac{\beta_1^{\alpha_1} \dots \beta_k^{\alpha_k}t^{\alpha_1+\dots +\alpha_k-1}}{\Gamma(\alpha_1) \dots \Gamma(\alpha_k)}\{{_{-\infty}}D_{x_1}^{\alpha_1-1} \dots {_{-\infty}}D_{x_k}^{\alpha_k-1}\text{exp}[x_1,\dots , x_k]\}_{x_i=-\beta_i t, i=1,\dots, k}.
\end{equation}
\end{rem}

\begin{rem}
The Caputo definition for the fractional derivative, $_a{\widehat{D}}_x^{\nu}$, is perhaps the second most widely encountered definition, particularly in studies of fractional differential equations. The Caputo derivative of order $\nu>0$ is defined by:
\begin{equation}\notag
\begin{split}
_a{\widehat{D}}_x^{\nu}f(x)&=\frac{1}{\Gamma(m-\nu)}\int_a^x(x-\tau)^{m-\nu-1}f^{(m)}(\tau)d\tau, \quad \text{ for } m-1<\nu<m \\ 
&= {_a}I_x^{m-\nu}{_a}D_x^{m} f(x)={_a}I_x^{m-\nu}f^{(m)}(x) \\
&= {_a}D_x^{\nu}f(x)-\sum_{k=0}^{m-1}\frac{f^{(k)}(a)(x-a)^{k-\nu}}{\Gamma(k-\nu+1)}.
\end{split}
\end{equation}
See \cite{Podlubny1999}. It can be shown that for $m$ an integer, $lim\big({_a}{\widehat{D}}_x^{\nu}f(x)\big)$ = $f^{(m)}(x)$ as $\nu \to m$ and the definition is completed by defining $_a{\widehat{D}}_x^{m}f(x)$ = $f^{(m)}(x)$. The second line makes clear that the Caputo fractional derivative is composed as firstly an integer-order derivative of order $m\ge1$ followed by a RL fractional integral of order $m-\nu$. The RL definition has the reverse of this process and, as indicated earlier, does not always yield the same result. From the definition above, $_a{\widehat{D}}_x^{\nu}f(x)$=${_a}D_x^{\nu}f(x)$ if and only if $f^{(k)}(a)=0$, for $k=0, \dots, m-1$. However, setting $a=-\infty$ in the Caputo definitions then in the instance of $f(x)$ and all its derivatives being zero as $x \to -\infty$, $_{-\infty}{\widehat{D}}_x^{\nu}f(x)$ = ${_{-\infty}}D_x^{\nu}f(x)$ (see \cite{Podlubny1999} Sec. 2.4.1). Consequently, Proposition 8.1 is also valid under a Caputo fractional derivative definition with lower terminal $a=-\infty$.
\end{rem}

As a simple application, consider the density, $f_Y(t)$, for the sum, $Y=Z+X$, of two independent random variables where $Z$ has the gamma density $G_{\alpha, \beta}(t)$ and $X$ is exponential with parameter $\lambda<\beta$. We will make use of the following lemma.
\begin{lem}
For $f(x)=(1-e^x)/x$ and $x < 0$, ${_{-\infty}}I_x^{\nu}f(x)$ and ${_{-\infty}}D_x^{\nu}f(x)$ are given by
\begin{equation}\notag
{_{-\infty}}I_x^{\nu}f(x) = (-1)^{\nu}x^{\nu-1}\gamma(1-\nu,-x), \quad 0 < \nu < 1
\end{equation}
and
\begin{equation}\notag
{_{-\infty}}D_x^{\nu}f(x) = (-1)^{-\nu}x^{-(1+\nu)}\gamma(1+\nu,-x), \quad \nu > 0 .
\end{equation}
\end{lem}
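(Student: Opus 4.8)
The plan is to reduce everything to the single exponential $e^{xt}$, for which the Liouville operators have already been determined, via the elementary representation
\[
f(x)=\frac{1-e^x}{x}=-\int_0^1 e^{xt}\,dt .
\]
The computation carried out just before Definition~8.2 gives ${_{-\infty}}D_x^{\nu}e^{xt}=t^{\nu}e^{xt}$ for $t>0$, and the identical change of variable $u=(x-\tau)t$ in definition (8.3) yields ${_{-\infty}}I_x^{\nu}e^{xt}=t^{-\nu}e^{xt}$. Hence, granting that each Liouville operator may be passed through the $t$-integral,
\[
{_{-\infty}}I_x^{\nu}f(x)=-\int_0^1 t^{-\nu}e^{xt}\,dt,\qquad {_{-\infty}}D_x^{\nu}f(x)=-\int_0^1 t^{\nu}e^{xt}\,dt .
\]

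First I would justify these interchanges. For the fractional integral this is immediate from Tonelli's theorem: inserting the representation of $f$ into (8.3) produces the double integral of $(x-\tau)^{\nu-1}e^{\tau t}$ over $\tau\in(-\infty,x)$ and $t\in(0,1)$, whose integrand is nonnegative because $x-\tau>0$ on the range, so the order of integration may be exchanged and finiteness is confirmed by the value obtained. For the fractional derivative I would write ${_{-\infty}}D_x^{\nu}=\tfrac{d^m}{dx^m}\,{_{-\infty}}I_x^{m-\nu}$, apply this Tonelli step to the inner fractional integral so that ${_{-\infty}}I_x^{m-\nu}f(x)=-\int_0^1 t^{-(m-\nu)}e^{xt}\,dt$, and then differentiate $m$ times under the integral sign; for fixed $x<0$ the integrand and its $x$-derivatives are bounded by the integrable function $t^{m-\nu}$ on $(0,1)$ (since $e^{xt}\le 1$ there), so dominated convergence legitimises the differentiation and returns $-\int_0^1 t^{\nu}e^{xt}\,dt$.

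Next comes the elementary evaluation, common to both cases. In each $t$-integral I would substitute $w=-xt$, which is legitimate precisely because $x<0$ forces $-x>0$ and keeps $w$ real and positive; then $e^{xt}=e^{-w}$ and the limits become $0$ and $-x$. The integral case yields $\int_0^{-x}w^{-\nu}e^{-w}\,dw=\gamma(1-\nu,-x)$ with real prefactor $-(-x)^{\nu-1}$, and the derivative case yields $\int_0^{-x}w^{\nu}e^{-w}\,dw=\gamma(1+\nu,-x)$ with real prefactor $-(-x)^{-(1+\nu)}$. Note that the restriction $0<\nu<1$ is exactly what makes $t^{-\nu}$ integrable at the origin in the first case, whereas $t^{\nu}$ is harmless there for every $\nu>0$; this is why the two parts carry their stated ranges.

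Finally I would reconcile these real prefactors with the complex form stated in the lemma, and this branch bookkeeping is the step I expect to be the only genuinely delicate point: the left-hand sides are real, while the displayed right-hand sides are written with complex powers of the negative quantity $x$. Adopting the paper's convention $(-1)^{\nu}=e^{i\nu\pi}$, equivalently $x^{a}=(-1)^{a}(-x)^{a}$ for $x<0$, one checks $-(-x)^{\nu-1}=(-1)^{\nu}x^{\nu-1}$ and $-(-x)^{-(1+\nu)}=(-1)^{\nu}x^{-(1+\nu)}$, which delivers the two claimed identities. As a consistency check I would verify that at an integer order $\nu=n$ the derivative formula collapses to the integer identity (3.8) for $\tfrac{d^n}{dz^n}\bigl(\gamma(1,z)/z\bigr)$, and that letting $\nu\to0^{+}$ in the integral formula recovers $f$ itself.
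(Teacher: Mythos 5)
Your proof is correct, but it takes a genuinely different route from the paper's. The paper splits $f(x)=(1-e^x)/x$ linearly into $1/x$ and $e^x/x$: it evaluates ${_{-\infty}}I_x^{\nu}(1/x)$ from Lemma 8.1(i), computes ${_{-\infty}}I_x^{\nu}(e^x/x)$ via the substitution $\tau \mapsto x-u$ together with the integral representation (3.11) of $\Gamma(a,z)$, assembles the first formula from $\gamma(a,z)=\Gamma(a)-\Gamma(a,z)$, and then obtains the derivative formula by differentiating that result $m$ times, invoking the identity (3.8). You instead write $f(x)=-\int_0^1 e^{xt}\,dt$ and exploit the eigenfunction relations ${_{-\infty}}I_x^{\nu}e^{xt}=t^{-\nu}e^{xt}$ and ${_{-\infty}}D_x^{\nu}e^{xt}=t^{\nu}e^{xt}$, passing the operators through the $t$-integral (Tonelli, then differentiation under the integral sign) so that a single substitution $w=-xt$ delivers both incomplete-gamma integrals at once. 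Your route buys uniformity (both parts fall out of one computation), explicit rigour about the interchanges, and a transparent explanation of the parameter ranges ($t^{-\nu}$ is integrable at $0$ only for $\nu<1$, while $t^{\nu}$ is harmless for all $\nu>0$); it also sidesteps the paper's appeal to (3.8) with noninteger first argument $1-m+\nu$, whereas (3.8) as stated covers only $\gamma(1,z)/z$. The paper's route, conversely, needs no interchange justifications and recycles the machinery already built in Lemma 8.1. Two small repairs to your write-up: the dominating function in the differentiation step should be $t^{\nu-m}=t^{-(m-\nu)}$, not $t^{m-\nu}$ --- the $j$th $x$-derivative of the integrand is $t^{\,j-(m-\nu)}e^{xt}\le t^{-(m-\nu)}$ on $(0,1)$, which is integrable precisely because $m-\nu<1$; and your closing branch identities $-(-x)^{\nu-1}=(-1)^{\nu}x^{\nu-1}$ and $-(-x)^{-(1+\nu)}=(-1)^{\nu}x^{-(1+\nu)}$ rest on formal exponent arithmetic for $(-1)^{\nu}=e^{i\nu\pi}$, but this is exactly the convention the paper itself invokes in the last line of its own proof, so you incur no gap that the paper does not.
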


\begin{proof}
Using the linearity property of ${_a}I_x^{\nu}$, we have 
\begin{equation}\notag
\begin{split}
{_{-\infty}}I_x^{\nu}f(x)&={_{-\infty}}I_x^{\nu}\Big(\frac{1}{x}\Big)-{_{-\infty}}I_x^{\nu}\Big(\frac{e^x}{x}\Big) \\
&=(-1)^\nu x^{\nu-1}\Gamma(1-\nu)-\frac{1}{\Gamma(\nu)}\int_{-\infty}^x(x-\tau)^{\nu-1}\frac{e^\tau}{\tau}d\tau \quad \text{ (from Lemma 8.1 part (i) and (8.3))} \\
&=(-1)^\nu x^{\nu-1}\Gamma(1-\nu)-\frac{(-1)}{\Gamma(\nu)}\int_0^{\infty}u^{\nu-1}\frac{e^{-(u-x)}}{u-x}du \quad \text{ ($\tau \mapsto x-u$)} \\
&=(-1)^\nu x^{\nu-1}\Gamma(1-\nu)-(-1)^\nu x^{\nu-1}\Gamma(1-\nu,-x),
\end{split}
\end{equation}
from the alternative integral representation for $\Gamma(a,z)$ in (3.11). The first part of the lemma then follows after noting that $\Gamma(a)-\Gamma(a,z)=\gamma(a,z)$.

For the second part
\begin{equation}\notag
\begin{split}
{_{-\infty}}D_x^{\nu}f(x)&=\frac{d^m}{dx^m}\Big({_{-\infty}}I_x^{m-\nu}f(x)\Big), \quad \text{ for } m-1< \nu < m \text{ and } m \in \mathbb{N}, \\
&=(-1)\frac{d^m}{dx^m}\frac{\gamma(1-m+\nu,-x)}{(-x)^{1-m+\nu}}= (-1)\frac{\gamma(1+\nu,-x)}{(-x)^{1+\nu}} \quad \text{ (using (3.8))} \\
&= (-1)^{-\nu}x^{-(1+\nu)}\gamma(1+\nu,-x)
\end{split}
\end{equation}
as required.
\end{proof}

Using the representation in (8.7), the density for $Y=Z+X$ for the instance of $\alpha>1$ is found as:
\begin{equation}\notag
\begin{split}
f_Y(t)&=\frac{\beta^\alpha \lambda t^{\alpha}}{\Gamma(\alpha)}\{{_{-\infty}}D_{x_1}^{\alpha-1}\text{exp}[x_1, x_2]\}_{x_1=-\beta t, x_2=-\lambda t} \\
&=\frac{\beta^\alpha \lambda t^{\alpha}}{\Gamma(\alpha)} \{ e^{x_2}(-1){_{-\infty}}D_{x_1}^{\alpha-1}\Big(\frac{1-e^{x_1-x_2}}{x_1-x_2}\Big) \} \\
&=\frac{\beta^\alpha \lambda t^{\alpha}}{\Gamma(\alpha)} \{ e^{x_2}(-1){_{-\infty}}D_y^{\alpha-1}\Big(\frac{1-e^y}{y}\Big) \}, \quad \text{ for } y=x_1-x_2 \\
&=\frac{\beta^\alpha \lambda t^{\alpha}}{\Gamma(\alpha)} \{ e^{x_2}(-1)^{-\alpha}y^{-\alpha}\gamma(\alpha,-y) \}_{x_1=-\beta t, x_2=-\lambda t} \quad \text{ (using Lemma 8.2) }\\
&=\frac{\beta^\alpha \lambda e^{-\lambda t}\gamma(\alpha,(\beta-\lambda)t)}{\Gamma(\alpha)(\beta-\lambda)^{\alpha}},\quad t>0,
\end{split}
\end{equation}
giving a generalisation of (6.4) to noninteger shape parameters and agreeing with Eqn.(6) of \cite{Nadarajah2005}. It is easily verified that the same result is found when $0 < \alpha <1$, using ${_{-\infty}}I_x^{1-\alpha}$ in place of ${_{-\infty}}D_x^{\alpha-1}$.

\subsection{The density for sums of independent gamma random variables}
For $Y=Z_1+Z_2$ where $Z_i$ has gamma density $G_{\alpha_i, \beta_i}(t)$ ($i=1,2$), we assume $\beta_2>\beta_1$ without loss of generality. We may find the density for $Y$ as:
\begin{equation}\notag
\begin{split}
f_Y(t)&=\frac{\beta_1^{\alpha_1}\beta_2^{\alpha_2}t^{\alpha_1+\alpha_2-1}}{\Gamma(\alpha_1)\Gamma(\alpha_2)}\{{_{-\infty}}D_{x_1}^{\alpha_1-1}{_{-\infty}}D_{x_2}^{\alpha_2-1}\text{exp}[x_1,x_2] \}_{x_1=-\beta_1t,x_2-\beta_2t}\\
&=\frac{\beta_1^{\alpha_1}\beta_2^{\alpha_2}t^{\alpha_1+\alpha_2-1}}{\Gamma(\alpha_1)\Gamma(\alpha_2)}\big\{{_{-\infty}}D_{x_1}^{\alpha_1-1}\Big(e^{x_1}\frac{\gamma(\alpha_2,x_1-x_2)}{(x_1-x_2)^{\alpha_2}}\Big)\big\}_{x_1=-\beta_1t,x_2-\beta_2t}
\end{split}
\end{equation}
and then continue by applying the fractional calculus version of Leibniz rule (see \cite{Podlubny1999} Sec. 2.7.2). Alternatively, we may take another route. 

The integral form for the divided difference $f[a_1,a_2]$ over the function $f$ may also be expressed as
\begin{equation}\notag
f[a_1,a_2]=\int_0^1f'(va_1+(1-v)a_2)dv, \quad a_i \in \mathbb{R}
\end{equation}
so that, for $x_1 > x_2$,
\begin{equation}\notag
\begin{split}
{_{-\infty}}D_{x_1}^{\alpha_1-1}{_{-\infty}}D_{x_2}^{\alpha_2-1}\text{exp}[x_1,x_2]&= {_{-\infty}}D_{x_1}^{\alpha_1-1}\{{_{-\infty}}D_{x_2}^{\alpha_2-1}\int_0^1e^{vx_1+(1-v)x_2}dv\}\\
&= \int_0^1{_{-\infty}}D_{x_1}^{\alpha_1-1}e^{vx_1}{_{-\infty}}D_{x_2}^{\alpha_2-1}e^{(1-v)x_2}dv\\
&= \int_0^1v^{\alpha_1-1}e^{vx_1}(1-v)^{\alpha_2-1}e^{(1-v)x_2}dv \\
&= e^{x_2}\int_0^1v^{\alpha_1-1}(1-v)^{\alpha_2-1}e^{(x_1-x_2)v}dv.
\end{split}
\end{equation}
Using (3.10), the integral representation for the confluent hypergeometric function, we may write
\begin{equation}\notag
\int_0^1v^{\alpha_1-1}(1-v)^{\alpha_2-1}e^{(x_1-x_2)v}dv = \frac{\Gamma(\alpha_1)\Gamma(\alpha_2)}{\Gamma(\alpha_1+\alpha_2)}M(\alpha_1,\alpha_1+\alpha_2,(x_1-x_2))
\end{equation}
and we may conclude that
\begin{equation}\notag
\begin{split}
f_Y(t)&=\frac{\beta_1^{\alpha_1}\beta_2^{\alpha_2}t^{\alpha_1+\alpha_2-1}}{\Gamma(\alpha_1)\Gamma(\alpha_2)}\{{_{-\infty}}D_{x_1}^{\alpha_1-1}{_{-\infty}}D_{x_2}^{\alpha_2-1}\text{exp}[x_1,x_2]\}|_{x_i=-\beta_i t, i=1,2} \\
&=\frac{\beta_1^{\alpha_1}\beta_2^{\alpha_2}t^{\alpha_1+\alpha_2-1}e^{x_2}}{\Gamma(\alpha_1+\alpha_2)}M(\alpha_1,\alpha_1+\alpha_2,(x_1-x_2))|_{x_i=-\beta_i t, i=1,2} \\
&=\frac{\beta_1^{\alpha_1}\beta_2^{\alpha_2}t^{\alpha_1+\alpha_2-1}e^{-\beta_2t}}{\Gamma(\alpha_1+\alpha_2)}M(\alpha_1,\alpha_1+\alpha_2,(\beta_2-\beta_1)t), \quad t>0
\end{split}
\end{equation}
giving a generalisation of (6.8) to noninteger shape parameters.

This approach lends itself to an easy extension for finding the density for the sum of $k$ independent gamma random variables, $k \ge 2$. We provide the \textit{Hermite-Genocchi theorem} for the integral form of the divided difference $f[a_1, \dots, a_n]$:
\begin{thm}
(\textit{Hermite-Genocchi}). Let $f \in C^{n-1}(\mathbb{R})$ and let $a_1, \dots, a_n$ be (not necessarily distinct) real numbers. Then, for $n \ge 2$,
\begin{equation}\notag
\begin{split}
f[a_1, \dots, a_n]&=\int_{\mathbb{S}_{n-1}}f^{(n-1)}(v_1a_1+ \dots + v_na_n)dv_1 \dots dv_{n-1} \\
&=\int_0^1dv_1\int_0^{1-v_1}dv_2 \dots \int_0^{1-\sum_{k=1}^{n-2}v_k}dv_{n-1}f^{(n-1)}(v_1a_1+ \dots v_na_n)
\end{split}
\end{equation}
where the domain of integration is the simplex
\begin{equation}\notag
\mathbb{S}_{n-1}=\big\{(v_1, v_2, \dots, v_{n-1}) \in \mathbb{R}_+^{n-1} : \sum_{i=1}^{n-1} v_i \le 1 \big\}
\end{equation}
and
\begin{equation}\notag
v_n=1-\sum_{i=1}^{n-1} v_i.
\end{equation}
\end{thm}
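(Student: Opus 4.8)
The plan is to derive the statement from the Hermite theorem (Theorem 5.1) already recorded above, since the two results are the \emph{same} integral written in different coordinates; the only genuine work is a linear change of variables on the domain of integration. I would begin from the nested form of Theorem 5.1, in which $f[a_1,\dots,a_n]$ is integrated over the ordered simplex $1 \ge v_1 \ge v_2 \ge \dots \ge v_{n-1} \ge 0$ with integrand $f^{(n-1)}$ evaluated at $a_1 + v_1(a_2-a_1) + \dots + v_{n-1}(a_n - a_{n-1})$.

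Next I would introduce the substitution $t_1 = 1 - v_1$, $t_i = v_{i-1} - v_i$ for $2 \le i \le n-1$, and $t_n = v_{n-1}$. Two checks make this work. First, the ordering constraints on the $v_j$ translate exactly into $t_i \ge 0$ together with $\sum_{i=1}^n t_i = 1$ (the sum telescopes to $(1-v_{n-1})+v_{n-1}$), so the ordered simplex is carried bijectively onto the standard simplex $\mathbb{S}_{n-1}$ with $t_n = 1 - \sum_{i=1}^{n-1} t_i$. Second, regrouping the argument of $f^{(n-1)}$ by the coefficient of each $a_i$ shows it equals $t_1 a_1 + \dots + t_n a_n$, precisely the integrand required by the claim.

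The remaining ingredient is the Jacobian. The map $(v_1, \dots, v_{n-1}) \mapsto (t_1, \dots, t_{n-1})$ is linear with lower-bidiagonal matrix carrying $-1$ on the diagonal and $+1$ on the first subdiagonal, so its determinant is $(-1)^{n-1}$ and $\lvert J \rvert = 1$; the volume element is thus preserved and the two integrals coincide. Because both the Hermite and the present forms are integral representations that remain well defined when the $a_i$ are not distinct, this single argument covers the confluent case as well, with no separate continuity step required.

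I expect the main obstacle to be purely bookkeeping: confirming that the ordering inequalities and the faces of the ordered simplex map cleanly onto the faces of $\mathbb{S}_{n-1}$, and verifying the coefficient regrouping term by term. As a self-contained alternative I would instead induct on $n$, starting from $f[a_1,a_2] = \int_0^1 f'(t a_1 + (1-t) a_2)\,dt$ and, at each step, carrying out the innermost $t_{n-1}$-integration explicitly; this lowers the order of the derivative by one and reproduces exactly the divided-difference recurrence (3.3) (invoking the symmetry of the divided difference to align the dropped endpoints), the non-distinct case then being recovered by continuity.
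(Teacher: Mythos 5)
Your proof is correct, but note that it does not parallel an argument in the paper, because the paper supplies no proof of Theorem 8.1 at all: it simply cites Baxter and Brummelhuis \cite{Baxter2011}. Your route instead reduces Hermite--Genocchi to the Hermite integral form already recorded as Theorem 5.1, and every step checks out. The substitution $t_1=1-v_1$, $t_i=v_{i-1}-v_i$ for $2\le i\le n-1$, $t_n=v_{n-1}$ does carry the ordered simplex $1\ge v_1\ge\dots\ge v_{n-1}\ge 0$ bijectively onto $\mathbb{S}_{n-1}$ (nonnegativity of the $t_i$ is exactly the chain of orderings, and $\sum_{i=1}^n t_i$ telescopes to $1$); regrouping $a_1+v_1(a_2-a_1)+\dots+v_{n-1}(a_n-a_{n-1})$ by the coefficient of each $a_i$ gives $(1-v_1)a_1+\sum_{i=2}^{n-1}(v_{i-1}-v_i)a_i+v_{n-1}a_n=t_1a_1+\dots+t_na_n$; and the linear map is triangular with determinant $(-1)^{n-1}$, so the volume element is preserved. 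Your observation that no separate confluence step is needed is also right, since Theorem 5.1 is itself stated for not-necessarily-distinct nodes (and the hypothesis $f\in C^{(n)}(\mathbb{R})$ is more than the continuous $(n-1)$th derivative that Theorem 5.1 requires). What your derivation buys is self-containedness: it exhibits Theorems 5.1 and 8.1 as the same integral in two coordinate systems, welding together the representation used in Section 5 and the one used in Section 8, whereas the paper outsources this to the literature. Your fallback induction is essentially the classical proof found in the cited sources, and you correctly flag its one delicate point: integrating out the innermost variable produces the factor $1/(a_n-a_{n-1})$ of the recurrence (3.3), so that argument must be run for distinct nodes first and the general statement recovered by continuity of both sides in $(a_1,\dots,a_n)$.
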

\begin{proof}
See, for example, \cite{Baxter2011} (noting that as $f[a_1,\ldots, a_n]$ is a symmetric function of its arguments, $f[a_1,\ldots, a_n] \equiv f[a_n, a_1, \ldots, a_{n-1}]$).
\end{proof}

We proceed by assuming, without loss of generality, that $\beta_k = \text{max}\{\beta_i, i=1, \dots, k\}$ so that $(x_i-x_k)>0$ for $i=1, \dots, k-1$. Next, apply Theorem 8.1 to $\text{exp}[x_1, \dots, x_k]$ to give
\begin{equation}\notag
\begin{split}
\text{exp}[x_1, \dots, x_k]&=\int_{\mathbb{S}_{k-1}}\text{exp}^{(k-1)}(v_1x_1+ \dots + v_kx_k)dv_1 \dots dv_{k-1} \\
&=\int_{\mathbb{S}_{k-1}}e^{v_1x_1+ \dots + v_kx_k}dv_1 \dots dv_{k-1} \\
&=\int_{\mathbb{S}_{k-1}}\Big(\prod_{i=1}^{k-1}e^{v_ix_i}\Big)e^{(1-\sum_{j=1}^{k-1}v_j)x_k}dv_1 \dots dv_{k-1}.
\end{split}
\end{equation}
The density for the sum of $k$ independent gamma random variables is then found as follows. Firstly,
\begin{equation}\notag
\begin{split}
{_{-\infty}}D_{x_1}^{\alpha_1-1}& \dots {_{-\infty}}D_{x_k}^{\alpha_k-1}\text{exp}[x_1, \dots, x_k] \\
&=\int_{\mathbb{S}_{k-1}}\Big(\prod_{i=1}^{k-1}{_{-\infty}}D_{x_i}^{\alpha_i-1}e^{v_ix_i}\Big){_{-\infty}}D_{x_k}^{\alpha_k-1}e^{(1-\sum_{j=1}^{k-1}v_j)x_k}dv_1 \dots dv_{k-1}\\
&=\int_{\mathbb{S}_{k-1}}\Big(\prod_{i=1}^{k-1}v_i^{\alpha_i-1}e^{v_ix_i}\Big)(1-\sum_{j=1}^{k-1}v_j)^{\alpha_k-1}e^{(1-\sum_{j=1}^{k-1}v_j)x_k}dv_1 \dots dv_{k-1} \\
&=e^{x_k}\int_{\mathbb{S}_{k-1}}\prod_{i=1}^{k-1}v_i^{\alpha_i-1}(1-\sum_{j=1}^{k-1}v_j)^{\alpha_k-1}e^{\sum_{j=1}^{k-1}v_j(x_j-x_k)}dv_1 \dots dv_{k-1} \\
&=\frac{\prod_{i=1}^k \Gamma(\alpha_i)}{\Gamma(\sum_{i=1}^k\alpha_i)}e^{x_k}\Phi_2^{(k-1)}\big(\alpha_1, \dots, \alpha_{k-1};\sum_{i=1}^k\alpha_i;(x_1-x_k), \dots, (x_{k-1}-x_k)\big),
\end{split}
\end{equation}
where, $\Phi_2^{(n)}$ denotes Erd\'elyi's confluent form of the fourth Lauricella function $F_D^{(n)}$ (see \cite{Srivastava1985} Sec.1.4) and where the multiple integral term in the penultimate line above, multiplied by $\Gamma(\alpha_1+ \dots +\alpha_k)/\prod_{i=1}^k \Gamma(\alpha_i)$, is recognised as being a representation for this confluent form. (See \cite{DiSalvo2008}).

Finally, we have the density for $Y=Z_1 + \dots +Z_k$
\begin{equation}\notag
\begin{split}
f_Y(t)&=\frac{\beta_1^{\alpha_1} \dots \beta_k^{\alpha_k}t^{\alpha_1+\dots +\alpha_k-1}}{\Gamma(\alpha_1) \dots \Gamma(\alpha_k)}{_{-\infty}}D_{x_1}^{\alpha_1-1}\dots {_{-\infty}}D_{x_k}^{\alpha_k-1}\text{exp}[x_1,\dots, x_k]|_{x_i=-\beta_i t, i=1, \dots, k} \\
&=\frac{\beta_1^{\alpha_1} \dots \beta_k^{\alpha_k}t^{\alpha_1+\dots +\alpha_k-1}}{\Gamma(\alpha_1+ \dots +\alpha_k)}e^{-\beta_kt}\Phi_2^{(k-1)}\big(\alpha_1, \dots, \alpha_{k-1};\sum_{i=1}^k\alpha_i;(\beta_k-\beta_1)t, \dots, (\beta_k-\beta_{k-1})t\big), \quad t>0
\end{split}
\end{equation}
which can be seen to be an equivalent expression to Eqn.(9) given in \cite{Mathai1982} and to Eqn.(6) in \cite{DiSalvo2008} and reduces to the expression for the density given earlier for $k=2$.

\appendix
\section{}
\subsection{Proof of Proposition 6.1} 
\begin{proof}
We prove this in two parts:

(a) Examine the differential of $f[a_1,a_2, \dots, a_k]$ w.r.t. $a_1$ as a limit as follows:
\begin{equation}\notag
\begin{split}
\frac{\partial}{\partial a_1}f[a_1,a_2, \dots, a_k]&= \lim_{h \to 0}\left(\frac{f[a_1+h, a_2, \dots, a_k]-f[a_1, a_2, \dots, a_k]}{h}\right)\\
&= \lim_{h \to 0}\left(\frac{f[a_1+h, a_2, \dots, a_k]-f[a_1, a_2, \dots, a_k]}{(a_1+h)-a_1}\right)\\
&=\lim_{h \to 0} f[a_1, a_2, \dots, a_k,a_1+h] \\
&=f[a_1^{(2)}, a_2, \dots, a_k].
\end{split}
\end{equation}

(b) Consider the derivative of $f[u_1^1, \dots,u_n^1,a_2, \dots, a_k]$, $n>1$, w.r.t. $a_1$ and where each argument $u_i^1$ is a function of $a_1$:
\begin{equation}\notag
\begin{split}
\frac{\partial}{\partial a_1}f[u_1^1, \dots,u_n^1,a_2, \dots, a_k]&= \sum_{i=1}^n \frac{\partial}{\partial u_i^1}f[u_1^1, \dots,u_n^1,a_2, \dots, a_k]\frac{du_i^1}{da_1}\\
&= \sum_{i=1}^n f[u_i^1, u_1^1, \dots,u_n^1, a_2, \dots, a_k]\frac{du_i^1}{da_1},
\end{split}
\end{equation}
using (a). Now define $u_i^1=a_1$ for $i=1, \dots, n$. Hence we have
\begin{equation}\notag
\frac{\partial}{\partial a_1}f[a_1^{(n)},a_2, \dots, a_k]= nf[a_1^{(n+1)}, a_2, \dots, a_k].
\end{equation}
Using (a) and (b), we therefore have
\begin{equation}\notag
\begin{split}
\frac{\partial^2}{\partial a_1^2}f[a_1, a_2, \dots, a_k]&=\frac{\partial}{\partial a_1}f[a_1^{(2)},a_2, \dots, a_k]\\
&= 2f[a_1^{(3)}, a_2, \dots, a_k].
\end{split}
\end{equation}
It follows then that (b) taken with (a), the proposition is proved. See also \cite{Hildebrand1956} Ch. 2.
\end{proof}

\subsection{Proof of Lemma 6.1}
\begin{proof}
We recall the general Leibniz rule for $m$ differentiable functions $g_i(x)$, $i=1, \dots, m$:
\begin{equation}\notag
\frac{\partial^n}{\partial x^n}\{g_1(x) \dots g_m(x)\} =\sum\limits_{\substack{r_1+\dots+r_m=n\\r_j \ge 0}}\frac{n!}{r_1! \dots r_m!}g_1^{(r_1)} \dots g_m^{(r_m)}
\end{equation}
and note that
\begin{equation}\notag
\frac{d^{k}}{d a^{k}}\Big(\frac{1}{a^n}\Big) =\frac{(-1)^k(n+k-1)!}{(n-1)!a^{n+k}}, \quad \text{ for } n \in \mathbb{N}.
\end{equation}
The partial derivative for the divided difference $f[a_1, \dots, a_m]$ in the lemma is then found by first invoking its Lagrange polynomial representation as follows:
\begin{equation}\notag
\begin{split}
\frac{\partial^k}{\partial a_1^{k_1} \dots \partial a_m^{k_m}}f[a_1, \dots , a_m] &= \frac{\partial^k}{\partial a_1^{k_1} \dots \partial a_m^{k_m}}\Big\{\sum_{i=1}^m f(a_i)\prod\limits_{\substack{q=1 \\ q\neq i}}^m\frac{1}{(a_i-a_q)}\Big\} \\
&=\sum_{i=1}^m \frac{\partial^{k_i}}{\partial a_i^{k_i}}\Big\{f(a_i)\prod\limits_{\substack{q=1 \\ q\neq i}}^m\frac{k_q!}{(a_i-a_q)^{k_q+1}}\Big\} \\
&= \sum_{i=1}^m k_i! \sum\limits_{\substack{r_1+\dots+r_m=k_i\\r_j \ge 0}} \frac{f^{r_i}(a_i)}{r_i!}\prod\limits_{\substack{q=1 \\ q\neq i}}^m\frac{(-1)^{r_q}(k_q+r_q)!}{(a_i-a_q)^{k_q+r_q+1}r_q!} \\
& \quad \text{ (using the general Leibniz rule)} \\
&= \sum_{i=1}^m k_i! \sum\limits_{\substack{r_1+\dots+r_m=k_i\\r_j \ge 0}} \frac{(-1)^{k_i-r_i}f^{r_i}(a_i)}{r_i!}\prod\limits_{\substack{q=1 \\ q\neq i}}^m\frac{(k_q+r_q)!}{(a_i-a_q)^{k_q+r_q+1}r_q!}.
\end{split}
\end{equation}
\end{proof}

\section*{Acknowledgements}
The author would like to thank Corina Constantinescu and Wei Zhu for helpful discussions. In addition, the author is pleased to acknowledge the valuable comments and suggestions of the two anonymous reviewers.
\\
\newline
This is a post-peer-review, pre-copyedit version of an article published in \textit{Statistical Papers}. The final authenticated version is available online at: https://doi.org/10.1007/s00362-021-01256-x

\section*{Declarations}
No financial support was provided for the conduct of the research and/or preparation of this manuscript.
\\
\\
The author has no conflicts of interest to declare that are relevant to the content of this manuscript.

\end{document}